\newtheorem*{rep@theorem}{\rep@title}
\newcommand{\newreptheorem}[2]{%
\newenvironment{rep#1}[1]{%
 \def\rep@title{#2 \ref{##1}}%
 \begin{rep@theorem}}%
 {\end{rep@theorem}}}
\newtheorem{thm}{Theorem}[section]
\newtheorem{lma}[thm]{Lemma}
\newtheorem{cor}[thm]{Corollary}
\newtheorem{prp}[thm]{Proposition}
\theoremstyle{remark}
\newtheorem{rmk}[thm]{Remark}
\theoremstyle{definition}
\newtheorem{exm}[thm]{Example}
\newtheorem{quest}[thm]{Question}
\newcommand{\CC}{\mathbb{C}}
\newcommand{\R}{\mathbf{R}}
\newcommand{\T}{\mathbf{T}}
\newcommand{\RR}{\mathbb{R}}
\newcommand{\OP}{\operatorname}
\newcommand{\id}{\mathrm{id}}
\theoremstyle:=definition,remark,plain,TheoremNum\do{% 
\expandafter\g@addto@macro\csname th@\theoremstyle\endcsname{% 
\addtolength\thm@preskip\parskip 
}% 
} 
\title[Exact Lagrangians inside symplectisations]{Exact Lagrangians in four-dimensional symplectisations}
\author{Georgios Dimitroglou Rizell}
\address{Department of Mathematics\\
Uppsala University\\
Box 480\\
SE-751 06 Uppsala\\
Sweden}
\email{georgios.dimitroglou@math.uu.se}
\thanks{The author is supported by the Knut and Alice Wallenberg Foundation under the grants KAW 2021.0191 and KAW 2021.0300, and by the Swedish Research Council under the grant number 2020-04426.}
\begin{document}
\begin{abstract}
In this note we provide explicit constructions of exact Lagrangian embeddings of tori and Klein bottles inside the symplectisation of an overtwisted contact three-manifold. Note that any closed exact Lagrangian in the symplectisation is displaceable by a Hamiltonian isotopy. We also use positive loops to exhibit elementary examples of topologically linked Legendrians that are dynamically non-interlinked in the sense of Entov--Polterovich.
\end{abstract}
\maketitle

\section{Introduction}

Both the existence and deformation of Lagrangian submanifolds in symplectic manifolds are known to exhibit different types of rigidity. In other words,there are subtle obstructions that can only be detected by techniques that gofar beyond classical topology. Notable such techniques and obstructions are 
based upon Gromov's theory of pseudoholomorphic curves \cite{Gromov}. This includes the theory of Floer homology, which was invented by Floer \cite{Floer}. The latter is a chain complex that is generated by Lagrangian intersection points for a pair of Lagrangians, whose differential counts pseudoholomorphic strips and, in favourable settings, the homology of this complex has been shown to be invariant under Hamiltonian isotopy.

We now recall a few classical examples of rigidity phenomena. First, there are several non-trivial obstructions to the existence of Lagrangian embeddings. Gromov's famous work \cite{Gromov} shows that any closed Lagrangian submanifold inside the standard symplectic vector space $\left(\CC^n,\omega_0=d\sum_i x_i\,dy_i\right)$ must admit a pseudoholomorphic disc with boundary on the submanifold; this is a relative 2-cycle of positive symplectic area. There are plenty of Lagrangian tori in any symplectic manifold, since they can be constructed e.g.~inside a symplectic Darboux chart. However, Gromov's result 
implies that none of them are exact. Viterbo later showed that also the Maslov class of such a Lagrangian satisfies strong restrictions, e.g.~it must be non-vanishing \cite{Viterbo:new}. More recently Cieliebak--Mohnke strengthened this by showing that there must exist a symplectic disc of Maslov index two with boundary on any Lagrangian torus in the symplectic vector space \cite{Cieliebak:Punctured}. Further, there are subtle obstructions on the topology of a Lagrangian in the standard symplectic vector space that have been derived using the technique of pseudoholomorphic curves. Notably, Shevchishin \cite{Shevchishin} and Nemirovski \cite{Nemirovski} have shown that the standard symplectic vector-space $(\CC^2,\omega_0)$ does not admit a Lagrangian embedding of the Klein bottle. The author together with Goodman--Ivrii showed that any Lagrangian torus inside the symplectic vector space must be unknotted \cite{Dimitroglou:Isotopy}.

The \emph{exact} Lagrangian submanifolds are known to satisfy particularly strong forms of rigidity, not only concerning their existence, but also concerning their deformations by Hamiltonian isotopies. Floer has shown that a closed exact Lagrangian submanifold of a symplectic manifold that has bounded geometry at infinity in the sense of Gromov \cite{Gromov} cannot be displaced by a Hamiltonian isotopy; the reason is that its Floer homology complex is non-vanishing and invariant under such deformations \cite{Floer}.

On the contrary, the results in this paper concern the construction of Lagrangians on the flexible side of symplectic topology, where the construction of the obstructions coming from pseudoholomorphic curves and Floer theory are known to fail, and where many phenomena have been shown to be governed by h-principles. More specifically, we will use the flexibility manifested in contact manifolds that are overtwisted in the sense of Eliashberg \cite{Eliashberg:Overtwisted} in dimension three in order to deduce the existence of closed exact Lagrangians in their symplectisations. The symplectisation of a contact manifold is an open symplectic manifold and, further, due to its concave cylindrical end, it does not have bounded geometry at infinity in the sense of \cite{Gromov}. Certain symplectisations can still be embedded in symplectic manifolds that have bounded geometry at infinity, but this is not the case for symplectisation of overtwisted contact manifolds; see e.g.~work \cite{Eliashberg:Filling} by Eliashberg for closed contact manifolds and more recent work by Cant \cite{Cant} for open contact manifolds. There is hence a priori no reason to expect that Floer homology is well-behaved in these symplectic manifolds. Indeed, since closed exact Lagrangians in the symplectisation are automatically displaceable by Hamiltonian isotopies (the Liouville flow restricted to exact Lagrangians can be extended to a global Hamiltonian isotopy); the existence of such Lagrangians, as established by Theorem \ref{thm:main}, implies that Floer homology, if it is well-defined, must be acyclic in this geometric setting.

 Even though the technical details have not been fully established, one should be able to construct Floer homology of exact Lagrangian submanifolds in symplectisations with coefficients in the contact homology DGA generated by periodic Reeb orbits. The invariance property of this version of Floer theory, once established, should imply that the contact homology DGA is necessarily acyclic whenever the symplectisation contains a closed exact Lagrangian. Note that Avdek showed in \cite{Avdek} that an acyclic contact homology DGA does not imply overtwistedness for three-dimensional contact manifolds (the converse, however, is true).

In this paper we produce new examples of closed exact Lagrangians in symplectisations, and also use the same ideas to prove that non-interlinkedness holds in the dynamical sense defined by \cite{EntovPolterovich} by using a similar trick.

\subsection{First result: Existence of closed exact Lagrangians} The existence of exact Lagrangians inside symplectisations has previously been established in high dimension. The first example was due to Muller in \cite{Muller}, who constructed an exact Lagrangian 3-sphere in the symplectisation of a five-dimensional contact manifold. Later, Murphy \cite{Murphy:Closed} used the h-principle for Lagrangian caps by Eliashberg--Murphy \cite{EliashbergMurphy} in order to produce plenty of examples of closed exact Lagrangian inside symplectisations of any high-dimensional overtwisted contact manifold; roughly speaking, they exist as soon as the formal homotopy-theoretic obstruction vanishes. The construction in the latter article is only applicable in symplectisations of dimension at least six. The reason is that, for their construction of a Lagrangian cap, one needs the input of a formally embedded Lagrangian cap which is cylindrical over a loose Legendrians. The construction then makes heavy use of the h-principle for loose Legendrians from \cite{LooseLeg}, which exists only in contact manifolds of dimension at least five. In this article, we show that closed exact Lagrangians exist also in four-dimensional symplectisations of overtwisted contact three-manifolds. In particular, this answers \cite[Question 1]{Cant} in recent work by Cant.

Below in Lemma \ref{lma:immersion} we will define an embedded non-exact Lagrangian torus $L_{\Lambda,\epsilon S^1} \subset S(Y,\alpha)$ that satisfies the following property: $L_{\Lambda,\epsilon S^1} \cap \{\tau=0\}$ is the Legendrian two-copy link that consists of $\Lambda$ and its time-$\epsilon$ Reeb push-off, $L_{\Lambda,\epsilon S^1} \cap \{\tau \le 0\}$ is an exact cylinder filling of the two copy link, while $L_{\Lambda,\epsilon S^1} \cap \{\tau\ge 0\}$ is an exact cylinder cap of the two-copy link. (The non-exactness of the entire torus comes from the fact that the symplectic action has a potential difference at the two Legendrian boundary components.) The following theorem gives a condition under which this torus is smoothly isotopic through Lagrangians to an exact Lagrangian torus.
 
\begin{thm}
\label{thm:main}
For any overtwisted contact three-manifold $(Y^3,\alpha)$, there exist exact Lagrangian embeddings of tori and Klein bottles in its symplectisation $S(Y,\alpha)$. 

Furthermore, for $\epsilon>0$ sufficiently small and where $\Lambda \subset (Y,\alpha)$ is a Legendrian knot with overtwisted complement.Then: 
\begin{itemize}
\item $L_{\Lambda,\epsilon S^1} \subset S(Y,\alpha)$ is Lagrangian isotopic to an exact Lagrangian torus by an isotopy supported in the subset $\{\tau \ge 0\} \subset S(Y,\alpha)$.
\item When the first Chern class of $(Y, \ker \alpha)$ vanishes and the Legendrian knot $\Lambda$, moreover, is null-homologous and has vanishing rotation number ${\tt rot}(\Lambda)=0$, there exist exact Lagrangian tori whose Maslov class vanish, that can be taken to coincide with $L_{\Lambda,\epsilon S^1} \subset S(Y,\alpha)$ in the subset $\{\tau \le 0\}$.
\end{itemize}
\end{thm}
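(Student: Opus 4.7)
The approach is to modify the cap portion of $L_{\Lambda,\epsilon S^1}$ in the region $\{\tau \ge 0\}$ so as to cancel the action defect $\epsilon$ and produce an exact Lagrangian torus. The main technical input is a positive Legendrian loop based at $\Lambda$, made available by the overtwistedness of the complement of $\Lambda$: by the Borman--Eliashberg--Murphy h-principle applied in $Y \setminus \Lambda$, combined with the existence theory of positive loops in overtwisted contact manifolds, one obtains a contractible positive Legendrian loop $\{\Lambda_t\}_{t \in S^1}$ based at $\Lambda$ whose generating contact Hamiltonian integrates to any prescribed small positive number, in particular to $\epsilon$.

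Next, via Lagrangian suspension I would lift this positive loop to an exact Lagrangian cylinder in the symplectisation whose negative end is the two-copy $\Lambda \sqcup \Lambda_\epsilon \subset \{\tau=0\}$: the contact isotopy closes up topologically but moves $\Lambda$ along the Reeb direction by precisely the integral $\epsilon$, which is exactly the primitive shift that must be matched. Capping off in the standard way inside $\{\tau \ge 0\}$ then produces an exact cap of $\Lambda \sqcup \Lambda_\epsilon$ whose primitive shift across the two boundary components is $-\epsilon$, cancelling the $+\epsilon$ shift of the exact cylindrical filling in $\{\tau \le 0\}$. Gluing yields a globally exact Lagrangian torus, and a Lagrangian interpolation supported in $\{\tau \ge 0\}$ between the new cap and the original cylindrical cap provides the required isotopy from $L_{\Lambda,\epsilon S^1}$. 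For the Klein bottle version, one carries out the same construction using a half-suspension glued to itself by an orientation-reversing identification along the $\{\tau=0\}$ slice; the flexibility of the overtwisted h-principle accommodates such a symmetric positive loop.

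For the Maslov vanishing clause, under the hypotheses $c_1(\ker\alpha)=0$, $[\Lambda]=0\in H_1$ and ${\tt rot}(\Lambda)=0$, the Maslov class of the constructed exact torus becomes a well-defined integer that splits as a sum of contributions from $\Lambda$ (zero by assumption) and from the Maslov rotation along the positive loop. The h-principle in the overtwisted complement then allows one to adjust the loop within its positive homotopy class, for example by inserting local Lutz-type modifications near the overtwisted disk, so as to tune this latter integer to zero while preserving both positivity and the action condition $\int h_t\, dt = \epsilon$.

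The step I expect to be the hardest is matching the primitive shift to $-\epsilon$ \emph{exactly} while simultaneously keeping the suspension embedded and contained in $\{\tau \ge 0\}$: this combines a delicate rescaling of the positive loop with a careful choice of primitive in the Lagrangian suspension formula, and one must track in parallel how these choices affect both embeddedness and the Maslov contribution, so that the action-matching, embedding, and Maslov-vanishing conditions can all be met by a single loop.
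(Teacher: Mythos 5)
Your central idea---absorb the action defect of the cap by a positive Legendrian loop of one component of the two-copy performed in the complement of the other, with the loop supplied by overtwistedness---is indeed the paper's ``main trick''. But the implementation you sketch has genuine gaps, and the step you yourself flag as hardest is precisely the one the paper solves by a different mechanism. The primitive shift of the suspension of a loop generated by $H_t$ is not the single number $\int H_t\,dt$ but an $e^{\tau}$-weighted quantity depending on how the loop is spread out in the $\tau$-direction, and tuning it to cancel the defect exactly \emph{while} keeping the suspension embedded and disjoint from the cylinder over the push-off is not delivered by a ``rescaling'' argument. The paper decouples the two constraints: after trading $\Lambda$ for an isotopic stabilised copy $\Lambda'$ (cusp connected sum with the stabilised overtwisted-disc boundary, implemented by a pair of trace cobordisms), the positive autonomous loop of Lemma \ref{lma:posloop} is used only \emph{qualitatively}---via Lemma \ref{lma:reeb} it is the Reeb flow of a new contact form $\beta$ for which there are \emph{no} Reeb chords between the two components. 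Once that holds, Proposition \ref{prp:deformation} inserts an explicit $\beta$-Reeb wiggle (Lemma \ref{lma:immersion}) whose action contribution $\int_\gamma e^{x_1}dx_2$ can be prescribed at will, so embeddedness is automatic and exactness requires no delicate matching. Note also that the Borman--Eliashberg--Murphy h-principle is not the right input here: what is needed is that $\Lambda$ is stabilised \emph{in the complement of its push-off} (because the overtwisted disc lies in that complement) together with the Colin--Ferrand--Pushkar positive loop for stabilised knots; BEM does not produce positive loops.

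For the Klein bottle, one cannot glue a half-suspension to itself ``by an orientation-reversing identification'': the result is a smooth Lagrangian only if the identification is realised by an actual Legendrian isotopy reversing the orientation of one boundary component, and the existence of such an isotopy is a nontrivial claim. The paper proves it only for the boundary of an overtwisted disc (Lemma \ref{lma:reversing}, via cusp connected sum with a pushed-off copy of the disc), which is why that construction takes $\Lambda=\Lambda_0=\partial D^2_{ot}$. Likewise, for the Maslov clause, ``Lutz-type modifications'' do not visibly control the Maslov contribution of the loop; the paper's mechanism is to insert the trace of a Legendrian loop passing through the stabilisation model of Lemma \ref{lma:posloop}, which shifts the Maslov potential by $\mp 2$ because the stabilised zero section has rotation number $\pm 1$, cancelling the Maslov index $\pm 2$ of the longitude of $L_{\Lambda,\epsilon S^1}$. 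Both of these inputs are missing from your argument.
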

\begin{rmk}
The technique can be applied to also produce examples in symplectisations of higher dimensional contact manifolds that are diffeomorphic to e.g.~$S^1 \times M$. Such examples are already known to exist from the work of Murphy \cite{Murphy:Closed}, but our construction is based upon more elementary and explicit techniques.
\end{rmk}

In view of the above result, we point out the following natural open questions.
\begin{quest}
Are there tight (necessarily non-fillable) contact manifolds whose symplectisations contain closed exact Lagrangians?
\end{quest}

Further, we note that the Lagrangians tori produced by Theorem \ref{thm:main} all bound smoothly embedded solid tori, and are thus unknotted.
\begin{quest}
 Does there exist knotted examples of exact Lagrangian tori in symplectisations of three-dimensional overtwisted contact manifolds?
\end{quest}
We expect that the flexibility properties of Legendrian knots in overtwisted contact manifolds allows us to deform the torus by replacing a trivial Lagrangian cylinder by the Lagrangian trace induced by a non-trivial loop of Legendrian embeddings, so that the resulting torus becomes smoothly knotted.

\subsection{Second Result: Dynamical non-interlinkedness of topologically linked Legendrians}

Entov--Polterovich proved the following ``dynamical interlinkedness'' result in \cite[Theorem 1.5]{EntovPolterovich}. Let $\Lambda \subset J^1M \setminus j^10$ be a closed Legendrian for which there exists a unique transverse Reeb chord from $j^10$ to $\Lambda$, and such that $\Lambda$ has a Chekanov--Eliashberg algebra that admits an augmentation. Then $j^10$ and $\Lambda$ are {\bf interlinked} in the following sense: for any contact isotopy $\phi^t$ whose (possibly non-autonomous) generating Hamiltonian is uniformly positive $H_t \ge c$ for some constant $c>0$, an intersection of $\phi^t(j^10)$ and $\Lambda$ appears in finite time depending on the constant $c$ and the length of the aforementioned Reeb chord. Here we give a simple construction of a stabilised Legendrian $\Lambda$ (its Chekanov--Eliashberg algebra does hence not admit an augmentation), with a unique chord from $j^10$ to $\Lambda$, but for which the conclusion of the aforementioned interlinkedness result fails.

 The condition on the existence of a transverse Reeb chord means that the two Legendrians $j^10$ and $\Lambda$ are non-trivially linked as a topological submanifolds. One could believe that the contact isotopies of $j^10$ that are generated by positive contact Hamiltonians eventually must perform a topological unlinking of the two Legendrians, which then would be a mechanism that forces the appearance of intersection points. However, as is shown by the following result, this is not the case.

Consider the Legendrian $j^1\cos{\theta} \subset J^1S^1$. There is a unique transverse Reeb chord from $j^10$ to $j^1\cos{\theta}$ given by $\{\theta=p=0, \; z \in [0,1]\}$. Denote by $\Lambda$ a small stabilisation of $j^1\cos{\theta}$, where the stabilisation is small and performed away from this Reeb chord. Consequently, we can ensure that $\Lambda$ also satisfies the property that there exists a unique transverse Reeb chord from $j^10$ to $\Lambda$.

\begin{thm}
\label{thm:noninterlinkedness}
Let $\Lambda \subset J^1S^1$ be the stabilised Legendrian constructed above, which thus has the property that there is a unique Reeb chord from $j^10$ to $\Lambda$, which moreover is transverse. There exists a uniformly positive autonomous Hamiltonian
$$H \colon J^1S^1 \to [c,+\infty), \:\: c >0,$$
such that the corresponding contact isotopy $\phi^t \in \OP{Cont}_0(J^1S^1)$ satisfies
$$\phi^t(j^10) \cap \Lambda = \emptyset$$
for all $t\in \R$. We may assume that $H =1$ holds outside of some compact subset.
\end{thm}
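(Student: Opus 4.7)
The plan is to construct the autonomous Hamiltonian $H$ explicitly, exploiting the geometric flexibility provided by the stabilisation of $\Lambda$. As motivation, note that the Reeb flow (generated by $H \equiv 1$) sends $j^10$ to the horizontal section $j^1 t$, which meets $\Lambda$ transversely at the endpoint $(0,0,1)$ of the unique Reeb chord at $t=1$. By the Entov--Polterovich interlinkedness theorem, when $\Lambda$ admits an augmentation of its Chekanov--Eliashberg algebra, this intersection cannot be avoided by any positive contact isotopy. Since $\Lambda$ is stabilised, the augmentation is absent, eliminating the Floer-theoretic obstruction and opening the way for an explicit construction.

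The key geometric input is a positive Legendrian loop based at $\Lambda$: a smooth family $\{\Lambda_s\}_{s \in S^1}$ of Legendrian embeddings with $\Lambda_0 = \Lambda_1 = \Lambda$, generated by a strictly positive contact Hamiltonian along $\Lambda_s$. Such loops exist for stabilised Legendrians in three-dimensional contact manifolds: the two cusps of the zigzag can be cyclically slid along $\Lambda$ in a positive manner. Crucially, the trace $\bigcup_s \Lambda_s$ can be taken to lie in a thin tubular neighbourhood of $\Lambda$ that is disjoint from $j^10$. Extending the loop to a positive contact isotopy $\psi^s$ of $J^1S^1$ and combining with the Reeb background $H \equiv 1$, one obtains a time-periodic positive contact flow whose action on $j^10$ remains in the complement of $\Lambda$ throughout the period.

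To produce a genuinely autonomous Hamiltonian, I would autonomise this time-periodic generator through a suspension argument: since $\psi^s$ is $s$-periodic, its time dependence can be absorbed by considering the contact structure on an appropriate circle-bundle completion, on which the flow becomes autonomous; descending back to $J^1S^1$ yields an autonomous positive Hamiltonian $H$ whose orbit of $j^10$ retraces the positive-loop orbit in $J^1S^1 \setminus \Lambda$. Arranging $H = 1$ outside a compact set is then standard, achieved by damping the modification using a cutoff that smoothly interpolates to the Reeb Hamiltonian, with positivity and avoidance preserved in the process.

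The principal technical obstacle is the autonomisation step: constructing a genuinely autonomous positive Hamiltonian from positive-loop data while preserving both positivity and the avoidance property $\phi^t(j^10) \cap \Lambda = \emptyset$ for all $t \in \R$. This requires careful interleaving of the positive-loop generator with the Reeb background, and reduces to a concrete contact-geometric construction in a Darboux chart modelling the zigzag, where the stabilisation's small Lagrangian bigon provides the explicit room through which the orbit of $j^10$ is routed.
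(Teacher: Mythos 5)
Your key geometric input is the right one --- the Colin--Ferrand--Pushkar positive loop of the stabilised Legendrian, realised inside a thin neighbourhood of $\Lambda$ disjoint from $j^10$ --- but the two steps you build on top of it both have gaps. First, the central claim that the combined flow's ``action on $j^10$ remains in the complement of $\Lambda$ throughout the period'' is asserted, not proved, and your closing remark about routing the orbit of $j^10$ through the stabilisation's small bigon suggests the wrong picture. The actual mechanism is much cruder: place $\Lambda$ (after a Legendrian isotopy, via Lemma \ref{lma:posloop}) inside the region $U=\{p'\in[\delta_0,\delta],\ z'\in[-\delta,\delta]\}$ of a standard jet-neighbourhood $J^1\Lambda'$ of $\Lambda'=j^1\cos\theta$, and take the Hamiltonian to equal $p'$ on $U$, generating the rotation $\theta'\mapsto\theta'-t$ there. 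This flow preserves $U$ \emph{as a set}, hence also preserves its complement; since $j^10$ starts in the complement of $U$, its orbit never enters $U$ at all and in particular never meets $\Lambda\subset U$. No routing of the orbit near $\Lambda$ is needed, and without an invariance statement of this kind your avoidance claim does not follow: if you only modify the Hamiltonian near $\Lambda$ and keep $H\equiv 1$ elsewhere, the Reeb flow pushes $j^10$ straight along the unique Reeb chord towards $\Lambda$, and you must explain why the orbit is deflected before reaching it.

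Second, the ``autonomisation'' step you flag as the principal technical obstacle is a red herring, and the suspension argument you sketch for it is not sound: one cannot in general absorb the time-dependence of a contact Hamiltonian into a circle-bundle completion and descend back while retaining the same time-$t$ maps on the original manifold. The point of the construction (Lemma \ref{lma:posloop} together with Lemma \ref{lma:reeb}) is that the positive loop of the stabilised Legendrian is \emph{already} generated by an autonomous Hamiltonian, namely $H=p'$ on the invariant region $U$; one then extends it to a positive autonomous Hamiltonian on all of $J^1S^1$, equal to $1$ outside a compact set, and no autonomisation is ever required.
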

We conclude the following consequences for the dynamics of the Reeb flow.
\begin{cor}
There exists a contact form
$$\alpha_1=e^f\alpha_0=e^f(dz-p\,d\theta)$$
on $J^1S^1$ for some smooth $f \colon Y \to \R$, which is standard outside of a compact subset, and for which there are no Reeb chords from $j^10$ to $\Lambda$.

In addition, for any generic path $\alpha_t=e^{f_t}\alpha_0$ of contact forms that connects $\alpha_0$ to $\alpha_1$ through contact forms that agree outside of some fixed compact subset, there must exist infinitely many Reeb chords from $j^10$ to $\Lambda$ for some time $t \in (0,1)$.
\end{cor}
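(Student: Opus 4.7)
For the first assertion, I would take $H$ to be the autonomous positive contact Hamiltonian provided by Theorem \ref{thm:noninterlinkedness} and define
$$\alpha_1 := \alpha_0/H = e^{f}\alpha_0, \quad f := -\log H.$$
Since $H \equiv 1$ outside a compact subset, $f$ vanishes there and $\alpha_1$ agrees with $\alpha_0$ in the complement of a compact set. The key input is the standard identity that, for a positive autonomous contact Hamiltonian $H$, the Reeb vector field of $\alpha_0/H$ coincides with the contact vector field $X_H$ generating the isotopy $\phi^t$. This immediately implies that Reeb chords of $\alpha_1$ from $j^10$ to $\Lambda$ correspond bijectively to pairs $(x,t)$ with $x \in j^10$, $t>0$, and $\phi^t(x) \in \Lambda$; Theorem \ref{thm:noninterlinkedness} rules out the existence of such pairs.

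For the second assertion, I would fix a generic path $\alpha_t=e^{f_t}\alpha_0$ with each $f_t$ supported in a fixed compact subset $K \subset J^1S^1$, and let $N(t) \in \ZZ_{\ge 0} \cup \{\infty\}$ denote the number of Reeb chords of $\alpha_t$ from $j^10$ to $\Lambda$. By construction $N(0)=1$ and, by the first part, $N(1)=0$. I would argue by contradiction: if $N(t)<\infty$ for every $t \in (0,1)$, then the parity of $N(t)$ is independent of $t$, contradicting that $N(0)=1$ is odd while $N(1)=0$ is even.

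To set up the parity invariance, observe that $j^10$ and $\Lambda$ are both compact, and outside $K$ every $\alpha_t$-Reeb trajectory translates purely in the $z$-direction, so any trajectory exiting $K$ cannot return. Consequently, every Reeb chord from $j^10$ to $\Lambda$ has image contained in a common compact region of $J^1S^1$. For a generic one-parameter family, the exceptional times $t^*$ at which non-transverse chords appear are isolated, and the generic degeneracy is a Morse-type collision of two transverse chords, changing $N$ by $\pm 2$.

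The technical heart of the argument, and what I expect to be the main obstacle, is to exclude the remaining failure mode: a family of chords whose length diverges as $t \to t^*$. By Arzelà--Ascoli together with the compact confinement just established, such a divergence forces the limiting configuration to involve a closed Reeb orbit of $\alpha_{t^*}$ around which the chord wraps an unbounded number of times. A standard SFT-style gluing argument then produces, for $t$ close to $t^*$ on at least one side, infinitely many Reeb chords obtained by concatenating short pieces with an arbitrary number of wrappings of the nearby orbit, contradicting $N(t)<\infty$ there. Thus under the finiteness assumption only parity-preserving bifurcations can occur, and the required contradiction follows.
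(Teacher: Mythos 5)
Your treatment of the first assertion coincides with the paper's: Lemma \ref{lma:reeb} applied to the autonomous positive Hamiltonian $H$ of Theorem \ref{thm:noninterlinkedness} exhibits $\phi^t$ as the Reeb flow of $H^{-1}\alpha_0$, and the absence of chords follows from $\phi^t(j^10)\cap\Lambda=\emptyset$ together with $H\equiv 1$ outside a compact set. For the second assertion your overall scheme (parity of the transverse chord count along a generic path, contradiction with $N(0)=1$ odd and $N(1)=0$ even) also matches the paper. The issue is the step you yourself flag as the technical heart: excluding branches of chords whose length diverges as $t\to t^*$. The paper disposes of this with an elementary first-variation computation, Equation \eqref{eq:zero}: for a smooth family of chords one has $\tfrac{d}{dt}\ell(c_t)=\int_{c_t}\dot f_t\,\alpha_t$ (the $d_Y\alpha_t$ term vanishes since $c_t$ is a Reeb trajectory and the boundary terms vanish since the endpoints lie on Legendrians), hence $|\tfrac{d}{dt}\ell(c_t)|\le \max|\dot f_t|\cdot\ell(c_t)$ and, by Gronwall, the length along any branch of the one-dimensional chord moduli space grows at most exponentially and cannot blow up at an interior time. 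The only remaining failure mode is the accumulation of infinitely many chords at some time, which is exactly the alternative conclusion of the corollary.

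Your proposed substitute for this step has a genuine gap. First, while Arzel\`a--Ascoli applied to a length-diverging, compactly confined family of chords does produce limiting full Reeb trajectories, the $\omega$-limit set of a bounded Reeb trajectory is only a nonempty compact invariant set; it need not contain a closed Reeb orbit (invariant tori carrying irrational linear flow are a standard counterexample), so the assertion that the chords must ``wrap an unbounded number of times around a closed orbit'' is unjustified. Second, even granting such an orbit, the ``SFT-style gluing'' producing infinitely many nearby chords requires nondegeneracy or Morse--Bott hypotheses on that orbit which genericity of a one-parameter family of contact forms does not provide at the exceptional time $t^*$. So the exclusion of length blow-up is not actually established in your argument; it should be replaced by the length-variation estimate above, which is both simpler and sufficient.
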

\begin{proof}
The contact form $\alpha_1$ exists by Lemma \ref{lma:reeb} applied to the positive contact isotopy produced by Theorem \ref{thm:noninterlinkedness}.

The existence of infinitely many Reeb chords is then proven as follows.

The derivative of the length $\ell(c_t)$ of a smooth family $c_t(s)$ of Reeb chords for $\alpha_t$ with starting point on $j^10$ and endpoint on $\Lambda$ is given by
\begin{equation}
\label{eq:zero}
\frac{d}{dt}\ell(c_t)= \frac{d}{dt}\int_0^t\int_0^1 d_{\R_t \times Y}\boldsymbol{\alpha}\left(\partial_t c, \partial_s c \right)ds\,d\tau= \int_{c_t} \iota_{\partial_t c}d_{ Y} \alpha_t+\dot{f}_t \alpha_{ t}=\int_{c_t} \dot{f}_t \alpha_{ t}
\end{equation}
and hence satisfies $|\frac{d}{dt}\ell(c_t)| \le \max |\partial_t {f}_t| |\ell(c_t)|$. Here $\boldsymbol{\alpha}$ denotes the one-form on $\R_t \times Y$ that is induced by the smooth path of one-forms $\alpha_t \in \Omega^1(Y)$; this means that $d_{\R_t \times Y}\boldsymbol{\alpha}=d_Y\alpha_t+\dot{f}_t dt\wedge \alpha_t$. For the first equality in Equation \eqref{eq:zero} we have used the fact that $c(t,s)=(t,c_t(s)) \in \R \times Y$ is a curve for which $\boldsymbol{\alpha}(\partial_t c)$ vanishes along the endpoints $s=0,1.$ The latter property holds since the endpoints of the curves $c_t$ live on Legendrian submanifolds.

Assuming that there are only finitely many Reeb chords from $j^10$ to $\Lambda$ for any $t \in (0,1)$ in a generic path of contact forms, we conclude that there is a uniform bound on their lengths. We can then use a standard bifurcation argument to show that there must be an odd number of Reeb chords from $j^10$ to $\Lambda$ for all generic moments in time. (However, if there are infinitely many chords, then an infinite chain of births and deaths can occur.)
\end{proof}

\subsection{Main trick used in the proofs}

The main mechanism used in the proofs of the two theorems above is the following elementary result that was first observed by Colin--Ferrand--Honda in \cite[Theorem 3.i]{Colin:Positive}: a stabilised Legendrian knot sits in a loop that is generated by a positive contact isotopy. This positive contact isotopy can moreover be assumed to be autonomous; see Lemma \ref{lma:posloop} for the precise statement that we need. In connection with the interlinkedness studied in \cite{EntovPolterovich}, we note that a pair of Legendrians for which one admits a positive autonomous loop in the complement of the other is not interlinked.

For the construction of the exact Lagrangian embeddings we need a {\bf two-copy Legendrian} $\Lambda \sqcup \phi^\epsilon_{R_\alpha}(\Lambda)$, i.e.~a Legendrian link that consists of $\Lambda$ and a small push-off by the Reeb flow, that satisfies the additional property that one of the components admits a positive loop in the complement of the other. Even though certain Legendrians are known to be contained in a positive loop (e.g. any closed Legendrian in the contact vector space; see \cite{Dimitroglou:Positive}), there are examples when the loop cannot be constructed in the complement of a second Legendrian that links the first one non-trivially.

More precisely, in work \cite{Dimitroglou:Positive} by the author joint with Colin--Chantraine, it was shown that a component of a two-copy does not admit a positive loop in the complement of the other component when the ambient contact manifold is e.g.~the standard tight contact sphere, and when the Chekanov--Eliashberg algebra of the Legendrian admits an augmentation. This result applies, in particular, to the two-copy of the standard Legendrian unknot in the standard contact vector space or in the tight contact sphere. In the proof of Theorem \ref{thm:main} the overtwistedness is crucial for the following reason:when there is an overtwisted disc in the complement of the two-copy Legendrian, the two-copy link is stabilised. (This means in particular that each component is stabilised in the complement of the other component.) We can then use Lemma \ref{lma:posloop} to construct an autonomous positive loop of one component in the complement of the other.

The overtwistedness is indeed necessary for the strategy of the proof of Theorem \ref{thm:main}. In fact, the property that some two-copy Legendrian to be stabilised can be used as a characterisation of overtwistedness.
\begin{thm}
\label{thm:twocopy}
 Let $\Lambda \sqcup \phi^\epsilon_{R_\alpha}(\Lambda) \subset (Y^{2n+1},\xi)$ be a Legendrian link that consists of a sphere together with its small Reeb push-off. If the link is stabilised, then the contact manifold $(Y,\xi)$ is overtwisted. 
\end{thm}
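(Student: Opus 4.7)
The plan is to reverse the strategy of Theorem~\ref{thm:main}: rather than deducing a stabilised two-copy from an overtwisted disc, we start from the stabilised two-copy hypothesis and construct a closed exact Lagrangian in $S(Y,\alpha)$ whose existence forces $(Y,\xi)$ to be overtwisted.

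The first step is to extract a positive autonomous loop from the stabilised configuration. Since the two-copy is stabilised, Lemma~\ref{lma:posloop} provides an autonomous positive contact Hamiltonian $H\co Y\to(0,\infty)$ whose flow preserves $\phi^\epsilon_{R_\alpha}(\Lambda)$ setwise as a closed loop of some period $T>0$, while remaining disjoint from $\Lambda$ throughout. The Lagrangian trace of this loop in $\{\tau\ge 0\}\subset S(Y,\alpha)$ provides an exact Lagrangian cap cobordism for the two-copy link, exactly as in the construction of $L_{\Lambda,\epsilon S^1}$, and can be glued to the standard exact Lagrangian cylinder filling of the two-copy in $\{\tau\le 0\}$. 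By choosing $\epsilon>0$ appropriately one balances the potential jumps at the two Legendrian boundary components and obtains a closed \emph{exact} Lagrangian $L\subset S(Y,\alpha)$, diffeomorphic to a sphere bundle over $S^1$ with fiber $\Lambda\cong S^n$. The sphere hypothesis on $\Lambda$ enters here to ensure that matching the potentials is a single scalar condition and that the trace of the loop and the cylinder glue to a closed manifold with the expected topology.

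Assume for contradiction that $(Y,\xi)$ is tight. Then the closed exact Lagrangian $L$ is Hamiltonian-displaceable via the Liouville flow, as observed in the introduction, so any well-defined Floer-type invariant of $L$ must vanish. On the other hand, for tight contact manifolds one expects the corresponding invariant to be non-vanishing: either via classical Lagrangian Floer homology, when $S(Y,\alpha)$ embeds into a fillable symplectic manifold of bounded geometry at infinity, or via the Floer homology with coefficients in the (non-acyclic in the tight case) contact homology DGA sketched in the introduction. This contradiction forces $(Y,\xi)$ to be overtwisted.

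The main obstacle is precisely this last rigidity step: one needs such a Floer-theoretic obstruction valid for \emph{every} tight $(Y,\xi)$, rather than just for those whose symplectisation embeds into a fillable piece, whereas the DGA-coefficient Floer theory remains conjectural. An alternative and possibly more elementary route would bypass Floer theory altogether and extract an overtwisted disc (or Plastikstufe in higher dimensions) directly from the stabilised two-copy, combining Murphy's h-principle for loose Legendrians with the Borman--Eliashberg--Murphy characterisation of overtwistedness; here again the sphere topology of $\Lambda$ would be essential for closing up the loose structure of $\phi^\epsilon_{R_\alpha}(\Lambda)$ in $Y\setminus\Lambda$ into a genuine embedded overtwisted submanifold of $Y$.
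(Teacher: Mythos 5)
Your strategy founders on the rigidity step, and you have correctly identified where: you need to know that a tight contact manifold cannot have a closed exact Lagrangian in its symplectisation. But this is precisely one of the open questions raised in this paper (``Are there tight (necessarily non-fillable) contact manifolds whose symplectisations contain closed exact Lagrangians?''), so your argument is circular in the sense that it presupposes an answer to a question that is currently unknown. The two candidate mechanisms you offer do not close the gap: classical Floer homology requires $S(Y,\alpha)$ to embed into a symplectic manifold of bounded geometry at infinity, which fails exactly in the non-fillable tight case you would need to rule out; and the Floer theory with coefficients in the contact homology DGA is explicitly conjectural in this paper. Worse, even granting that theory, your claim that the DGA is ``non-acyclic in the tight case'' is false: by Avdek's result cited in the introduction, there exist tight contact three-manifolds with acyclic contact homology DGA, so acyclicity of the DGA cannot be upgraded to overtwistedness. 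There is also a secondary issue in your first step: Lemma \ref{lma:posloop} produces a positive loop of a \emph{stabilised} Legendrian inside a jet neighbourhood; to run it you need one component to be stabilised \emph{in the complement of the other}, which requires an argument (the paper supplies one inside its actual proof, via a push-off in a standard neighbourhood), not just the bare hypothesis that the link is stabilised.

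The paper's proof is entirely different and avoids Floer theory: it is a contact surgery argument. One performs contact $-1$-surgery along one component $\Lambda'$ of the link, chosen so that the other component is stabilised in $Y\setminus\Lambda'$; since the Reeb push-offs $\phi^{\pm\epsilon}_{R_\alpha}(\Lambda')$ are Legendrian isotopic to the belt sphere $\Lambda''$ of the surgery, the belt sphere is stabilised in the surgered manifold $Y^-_{\Lambda'}$. Contact $+1$-surgery along the belt sphere cancels the $-1$-surgery and returns $Y$, and $+1$-surgery on a stabilised Legendrian sphere is known to produce an overtwisted contact manifold (Ozbagci in dimension three, Casals et al.\ in higher dimensions). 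This is where the sphere hypothesis genuinely enters --- it is needed for the surgery/belt-sphere cancellation --- rather than for the potential-matching role you assigned it. If you want a workable proof, this elementary surgery route is the one to pursue.
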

\begin{proof}
 
Start by performing a contact $-1$-surgery along one of the two spherical components $\Lambda \sqcup \phi^\epsilon_{R_\alpha}(\Lambda)$ of the link; denote this component by $\Lambda'$, and denote the resulting contact manifold by $Y_{\Lambda'}^-.$ We pick $\Lambda'$ so that the remaining component $\Lambda \sqcup \phi^\epsilon_{R_\alpha}(\Lambda) \setminus \Lambda'$ is stabilised inside the complement $Y \setminus \Lambda'$. This produces a contact manifold $Y_{\Lambda'}^-$ in which the Legendrian belt sphere $\Lambda'' \subset Y_{\Lambda'}^-$ produced by the surgery is stabilised. To that end, note that $\phi^{\pm \epsilon}_{R_\alpha}(\Lambda') \subset Y \setminus \Lambda'$ both are Legendrian isotopic to the belt sphere $\Lambda''$ when considered inside $Y_{\Lambda'}^-$.

The result then follows immediately from the standard fact that contact $+1$-surgery performed along the belt sphere $\Lambda'' \subset Y_{\Lambda'}^-$ produced by the $-1$-surgery gives back the original contact manifold $(Y_{\Lambda'}^-)_{\Lambda''}^+=Y$, in combination with the result that contact $+1$-surgery performed on a stabilised Legendrian sphere produces an overtwisted contact manifold. We refer to \cite{Ozbagci} for the 3-dimensional case and to \cite[Part (5) $\Rightarrow$ (1) of Theorem 1.1]{Casals:Geometric} for the higher-dimensional case of the latter result. This finishes the proof.

For completeness, we end by recalling how overtwistedness is established. Overtwistedness of $Y$ is equivalent to the property that the standard unknotted Legendrian sphere $\Lambda_0 \subset Y$ admits a stabilisation; this follows by Giroux' criterion in dimension three \cite{Giroux:Convexite}, or by \cite{Casals:Geometric} in higher dimensions. To show that the standard Legendrian sphere $\Lambda_0$ is stabilised if it lives inside the $+1$-surgery $Z_{\Lambda''}^+$ performed on a stabilised Legendrian sphere $\Lambda'' \subset Z$, one can argue as follows. Consider a standard Legendrian neighbourhood of $\Lambda'' \subset Z$ contactomorphic to $U^*\R^n$, in which $\Lambda''$ is identified with the fibre $U^*_0\R^n$ over the origin. It is sufficient to show that, if $\Lambda''$ is stabilised in $Z$, then it is also stabilised in the complement of the Legendrian push-off given by $U^*_{\mathbf{x}}\R^n \subset U^*\R^n$ for some small $\mathbf{x} \in \R^n \setminus \{0\}$. Indeed, if this is the case, then one can show that $\Lambda_0 \subset Z^+_{\Lambda''}$ is stabilised. Namely, the $+1$-surgery along the push-off of $\Lambda'' \subset Z$ identified with $U^*_{\mathbf{x}}\R^n$ produces the same contact manifold as for the original $+1$-contact surgery along $\Lambda''$. However, in the latter contact manifold, one immediately identifies $U^*_{0}\R^n$ with the standard Legendrian sphere $\Lambda_0\subset Z_{\Lambda''}^+$, which thus is stabilised as sought.
\end{proof}

The above result raises the following natural question:
\begin{quest}
 Are there two-copy links in non-overtwisted contact manifolds where one of the components admits a positive loop in the complement of the other?
 \end{quest}

\section{Prerequisites}

\subsection{Contact manifolds and symplectisations}
A {\bf contact manifold} is an odd-dimensional manifold $(Y^{2n+1},\xi)$ equipped with a maximally non-integrable field of tangent hyperplanes $\xi \subset TY$. We will only consider the co-oriented case, i.e.~when $\xi=\ker \alpha$ for a so-called contact one-form $\alpha \in \Omega^1(Y)$; the contact condition translates to the requirement that $d\alpha|_{\ker \alpha}$ is a non-degenerate 2-form. An $n$-dimensional submanifold $\Lambda^n \subset (Y^{2n+1},\xi)$ is {\bf Legendrian} if $T\Lambda \subset \xi$.

\begin{exm}
The jet-space $(J^1M=T^*M \times \RR_z,\alpha_0)$ of a smooth manifold is an important example of a contact manifold, where $\alpha_0 \coloneqq dz-p\,dq$ and $p\,dq \in \Omega^1(T^*M)$ denotes the tautological one-form. Here we will only need the cases $M=S^1$ and $M=\RR$.
\begin{itemize}
\item $J^1S^1=T^*S^1\times\RR_z=S^1_\theta \times \RR_p\times\RR_z$ where the tautological one-form on $T^*S^1$ takes the simple form $p\,dq=p\,d\theta$.
\item $\left(J^1\RR=\RR^3_{q,p,z},\alpha_0=dz-p\,dq\right)$ is the standard contact Darboux ball.
\end{itemize}
\end{exm}
\begin{exm}
The one-jet $j^1f$ of a smooth function $f \colon M \to \RR$ is a Legendrian submanifold. A Legendrian submanifold in $J^1M$ can be recovered from its so-called {\bf front projection}, which is the canonical projection $J^1M \to M \times \RR_z$. See Figure \ref{fig:unknot} for a Legendrian in $J^1\R$ that is not a section.
\end{exm}

Given a choice of a contact one-form $\alpha \in \Omega^1(Y)$, the {\bf symplectisation} of $(Y,\alpha)$ is symplectic $2(n+1)$-dimensional manifold
$$ S(Y,\alpha) \coloneqq \left(\RR_\tau \times Y,d(e^\tau\alpha)\right)$$
equipped with the natural choice of primitive $e^\tau\alpha$ of its symplectic form. Two different contact forms for the same contact structure differ by a factor $e^g$ for some smooth function $g \colon Y \to \R$. The symplectisations for two different choices of contact forms $\alpha$ and $e^g\alpha$ are related by a symplectomorphism of the symplectisations
\begin{gather*}
S(Y,\alpha) \to S(Y,e^g\alpha),\\
(\tau,y) \mapsto (\tau-g(y),y),
\end{gather*}
that preserves the primitives of the symplectic forms; such a map is called an {\bf exact symplectomorphism}.

A half-dimensional submanifold $L \subset S(Y,\alpha)$ is {\bf Lagrangian} if the symplectic form $d(e^\tau\alpha)$ vanishes on $TL$. It is {\bf exact Lagrangian} if $e^\tau\alpha|_{TL}$ moreover is an exact one-form.

\subsection{Contact Hamiltonians}

For a contact manifold $(Y,\alpha)$ endowed with a contact form $\alpha$, there is a bijective correspondence between contact isotopies
$$\phi^t \colon (Y,\ker \alpha) \to (Y,\ker \alpha),$$
i.e.~smooth isotopies that preserve the contact distribution $\ker \alpha \subset TY$, and so-called {\bf contact Hamiltonians}, which are smooth $t$-dependent functions $H_t \colon Y \to \R$. A contact Hamiltonian that does not depend on $t$ is called {\bf autonomous}. Given a contact isotopy $\phi^t$ with infinitesimal generator $\dot{\phi^t} \circ (\phi^t)^{-1} \in \Gamma(TY)$, the corresponding contact Hamiltonian is given by
$$ H_t = \alpha \left(\dot{\phi^t}\circ (\phi^t)^{-1}\right) \colon Y \to \R.$$
Conversely, any such function $H_t$ gives rise to a contact isotopy via the ODE
$$ \begin{cases}
\alpha\left(\dot{\phi^t} \circ (\phi^t)^{-1} \right)=H_t,\\
\left.\iota_{\dot{\phi^t} \circ (\phi^t)^{-1}}d\alpha\right|_{\ker \alpha}=\left.-dH_t\right|_{\ker\alpha}.
\end{cases}$$
This correspondence can also be expressed in the following manner by referring to the symplectisation. A contact isotopy $\phi^t$ lifts to a symplectic isotopy
\begin{gather*}
S(Y,\alpha) \to S(Y,\alpha),\\
(\tau,y) \mapsto (\tau-f_t(y),\phi^t(y)),
\end{gather*}
where function $f_t(y)$ is called the {\bf conformal factor} and is determined by $(\phi^t)^*\alpha=e^{f_t}\alpha$. This symplectic isotopy is in fact generated by a Hamiltonian of the form $e^\tau H_t$, where $H_t \colon Y \to \R$ is the corresponding contact Hamiltonian. 
\begin{exm}
The {\bf Reeb flow} induced by the contact form $\alpha$ is the contact isotopy
$$ \phi^t_{R_{\alpha}} \colon (Y,\alpha) \to (Y,\alpha) $$
generated by the autonomous contact Hamiltonian $H \equiv 1$. The Reeb flow can be equivalently characterised as the flow induced by the vector field $R_\alpha$ uniquely determined by
$$ \alpha(R_\alpha) \equiv 1, \:\: \iota_{R_\alpha}d\alpha=0.$$
Note that $\phi^t$ preserves the contact form $\alpha$.
\end{exm}
The Reeb flows for different contact forms on $(Y,\ker\alpha)$ correspond precisely to the contact isotopies that are generated by autonomous positive Hamiltonians by the following basic result.
\begin{lma}
\label{lma:reeb}
The strictly positive autonomous contact Hamiltonian $H \colon Y \to \R$ for the contact form $\alpha$ generates the Reeb flow for the contact form $e^{-H}\alpha$.
\end{lma}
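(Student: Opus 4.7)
The plan is to verify directly that the infinitesimal generator $X$ of the contact isotopy $\phi^t$—characterized by $\alpha(X)=H$ together with $\iota_X d\alpha|_{\ker\alpha}=-dH|_{\ker\alpha}$—satisfies the two defining properties of the Reeb vector field of the rescaled contact form $\tilde\alpha$, namely $\tilde\alpha(X)\equiv 1$ and $\iota_X d\tilde\alpha\equiv 0$. A preliminary step I would carry out is to upgrade the second defining equation of $X$ from its restriction to $\ker\alpha$ into a global formula: pairing both sides with $R_\alpha$ and using $\iota_{R_\alpha}d\alpha=0$ pins down the missing $\alpha$-component, yielding
$$\iota_X d\alpha=-dH+dH(R_\alpha)\,\alpha$$
on the whole tangent bundle. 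Cartan's formula then packages this into the conformal identity $\mathcal{L}_X\alpha=dH(R_\alpha)\,\alpha$, which I would use repeatedly below.

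The main place where the autonomy hypothesis enters is the following short derivation. Differentiating the scalar identity $\alpha(X)=H$ along $X$ gives $(\mathcal{L}_X\alpha)(X)=\mathcal{L}_X H$, and substituting the conformal identity above on the left-hand side produces the key formula
$$X(H)=H\cdot dH(R_\alpha).$$
This is really the heart of the lemma: although $H$ is not in general preserved along its own Hamiltonian flow, its variation is controlled by $H$ itself in a precise multiplicative way, which is exactly what is needed in order to absorb into a rescaling of $\alpha$ the failure of $X$ to preserve $\alpha$.

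With these two identities in hand, the two Reeb axioms for $\tilde\alpha$ reduce to a direct expansion of $d\tilde\alpha$ as a sum of an $\alpha$-term and a $d\alpha$-term followed by insertion of $X$, using $\alpha(X)=H$, the formula for $\iota_X d\alpha$, and the autonomy identity $X(H)=H\cdot dH(R_\alpha)$; all contributions then cancel. I anticipate no serious obstacle: the only delicate point is the algebraic bookkeeping that makes the $\alpha$-components cancel cleanly, which is precisely what dictates the form of the conformal rescaling appearing in the statement.
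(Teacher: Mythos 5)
Your route is genuinely different from the paper's: you verify the two Reeb axioms for the rescaled form directly on $Y$, whereas the paper passes to the symplectisation, lifts $H$ to the Hamiltonian $e^\tau H$ on $S(Y,\alpha)$, and observes that the exact symplectomorphism $(\tau,y)\mapsto(\tau+\log H,y)$ onto $S(Y,H^{-1}\alpha)$ carries this Hamiltonian to $e^\tau$, whose flow is the Reeb flow of the target contact form. Your intermediate identities are all correct: the global formula $\iota_X d\alpha=-dH+dH(R_\alpha)\,\alpha$, the conformal identity $\mathcal{L}_X\alpha=dH(R_\alpha)\,\alpha$, and the consequence $X(H)=H\cdot dH(R_\alpha)$ each check out, and the direct verification is arguably more self-contained than the symplectisation argument.

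However, the final verification does not close for the form $e^{-H}\alpha$ as literally written: your own first axiom already fails, since $\bigl(e^{-H}\alpha\bigr)(X)=He^{-H}\neq 1$ in general. The axiom $\tilde\alpha(X)\equiv 1$ together with $\alpha(X)=H$ forces $\tilde\alpha=H^{-1}\alpha=e^{-\log H}\alpha$, and with this choice your cancellation does go through: writing $d(H^{-1}\alpha)=-H^{-2}dH\wedge\alpha+H^{-1}d\alpha$ and inserting $X$, the $dH$-terms cancel using $\alpha(X)=H$ and the global formula for $\iota_Xd\alpha$, while the remaining coefficient of $\alpha$ is $-H^{-2}X(H)+H^{-1}dH(R_\alpha)$, which vanishes by your identity $X(H)=H\cdot dH(R_\alpha)$. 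So the exponent in the statement should be read as $-\log H$ rather than $-H$; this is consistent with the paper's own proof, which constructs a symplectomorphism onto $S(Y,H^{-1}\alpha)$. Once the rescaled form is named correctly, your argument is complete; as written, the claim that ``all contributions cancel'' for the form appearing in the statement is false, and you should flag that the bookkeeping itself dictates the factor $H^{-1}$.
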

\begin{proof}
The Hamiltonian $e^\tau \cdot H$ has level sets $\{e^\tau \cdot H = e^c\}=\{\tau=c-\log{H}\}$. Consider the exact symplectomorphism
\begin{gather*}
\Phi \colon S(Y,\alpha) \to S(Y,H^{-1}\alpha),\\
(\tau,y) \mapsto (\tau+\log{H},y),
\end{gather*}
under which $e^\tau \cdot H \circ \Phi^{-1} \equiv e^\tau$. In other words, the Reeb flow for the contact form $e^{-H}\alpha$, which for the same contact form is generated by a contact Hamiltonian constantly equal to one, is generated by the contact Hamiltonian $H$ under the correspondence induced by $\alpha$.
\end{proof}

Here we will mostly be concerned with the behaviour of the contact isotopy along a Legendrian submanifold; this is a so-called {\bf Legendrian isotopy}. It is a standard fact that any smooth isotopy $\Lambda_t \subset (Y,\alpha)$ through Legendrian submanifolds can be induced by an ambient contact isotopy, which thus is generated by a global contact Hamiltonian \cite{Geiges}. Note that the value of the contact Hamiltonian along the image of the Legendrian is uniquely determined by the path of submanifolds (i.e.~the unparametrised isotopy).

\subsection{Constructions of Lagrangian cobordisms}
Here we consider different methods for constructing Lagrangian submanifolds and cobordism inside the symplectisation.

First, recall that a Lagrangian $L \subset S(Y,\alpha)$ is {\bf cylindrical (over $\Lambda$)} in a subset of the form
$$U \times Y \subset S(Y,\alpha), \:\: U \subset \R,$$
if the identity
$$ L \cap (U \times Y) = U\times \Lambda \subset S(Y,\alpha)$$
is satisfied. It follows by the Lagrangian property that $\alpha$ vanishes on $\Lambda \subset Y$, hence the latter is a Legendrian submanifold.

Given a Legendrian isotopy $\Lambda_t \subset (Y,\alpha)$, $t \in [0,1]$, one can construct a properly embedded Lagrangian cobordism $L \subset S(Y,\alpha)$ diffeomorphic to a cylinder, which is cylindrical over the Legendrian $\Lambda_0$ (resp. $\Lambda_1$) in the subset $(-\infty,-T]\times Y$ (resp. $[T,+\infty) \times Y$) for some $T \gg 0$. There are several constructions of this so-called {\bf Lagrangian trace cobordism} in the literature; see e.g.~\cite[Theorem 1.2]{Chantraine:Concordance} by Chantraine.

Next we will also need the following technique for constructing Lagrangian cobordisms from the Reeb flow; such a construction has been used previously e.g.~by Mohnke in \cite{Mohnke}.
\begin{lma}
\label{lma:immersion}
If $\Lambda \subset (Y,\alpha)$ is an embedded Legendrian, then for any smooth immersion
$$\gamma=(\gamma_1,\gamma_2) \colon [0,1] \looparrowright \R^2,$$
the induced immersion
\begin{gather*}
L_{\Lambda,\gamma} \colon [0,1] \times \Lambda \looparrowright S(Y,\alpha),\\
(t,x) \mapsto \left(\gamma_1(t),\phi^{\gamma_2(t)}_{R_\alpha} (x)\right),\:\: t \in I, x \in \Lambda,
\end{gather*}
is Lagrangian. The images $L(t_i,x_i)$, $i=1,2$, coincide if and only if $\gamma_1(t_1)=\gamma_1(t_2)$, and $x_1,x_2 \in \Lambda$ are the endpoints of a Reeb chord in $(Y,\alpha)$ of length $|\gamma_2(t_1)-\gamma_2(t_2)|$.

Furthermore, the symplectic action form $L_{\Lambda,\gamma}^*e^\tau\alpha=dh$ is exact, where the primitive $h$ can be taken to be constantly equal to $h\equiv 0$ and $h \equiv \int_{[0,1]} e^{\gamma_1} d\gamma_2$ on the boundary component $\{t=0\}$ and $\{t=1\}$, respectively.
\end{lma}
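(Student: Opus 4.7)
The plan is to compute explicitly the pullback $L_{\Lambda,\gamma}^\ast(e^\tau \alpha)$ and show it is exact; this simultaneously settles the Lagrangian property (since $d(e^\tau\alpha)$ will then pull back to zero) and produces the primitive $h$. The key structural input is that the Reeb flow preserves the contact form, so $(\phi^s_{R_\alpha})^\ast \alpha = \alpha$ for every $s$.

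First I would fix $(t,x) \in [0,1] \times \Lambda$ and evaluate the pullback on a spanning set of tangent vectors. Along $\partial_t$, the image moves with velocity $\gamma_1'(t)\partial_\tau + \gamma_2'(t)\,R_\alpha$ at the point $(\gamma_1(t), \phi^{\gamma_2(t)}_{R_\alpha}(x))$, so $e^\tau\alpha$ evaluates to $e^{\gamma_1(t)}\gamma_2'(t)$. Along a vector $v \in T_x\Lambda$, the image moves by $d\phi^{\gamma_2(t)}_{R_\alpha}(v)$, which lies in $\ker\alpha$ because the Reeb flow preserves $\alpha$ and $T\Lambda \subset \ker\alpha$ by hypothesis. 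Combining these two computations gives
$$L_{\Lambda,\gamma}^\ast(e^\tau\alpha) = e^{\gamma_1(t)}\gamma_2'(t)\,dt,$$
which is a one-form in $dt$ alone, hence closed; this proves $L_{\Lambda,\gamma}$ is Lagrangian. A primitive independent of $x$ is then
$$h(t) \;=\; \int_0^t e^{\gamma_1(s)}\gamma_2'(s)\,ds,$$
with $h(0)=0$ and $h(1) = \int_{[0,1]} e^{\gamma_1}\,d\gamma_2$, as desired.

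For the self-intersection statement, I would simply unpack the equality $L_{\Lambda,\gamma}(t_1,x_1) = L_{\Lambda,\gamma}(t_2,x_2)$ componentwise: the $\R_\tau$-factor forces $\gamma_1(t_1) = \gamma_1(t_2)$, while the $Y$-factor gives $\phi^{\gamma_2(t_1)-\gamma_2(t_2)}_{R_\alpha}(x_1) = x_2$, which, since both $x_1, x_2$ lie on the Legendrian $\Lambda$, is precisely the data of a Reeb chord of $\Lambda$ of length $|\gamma_2(t_1)-\gamma_2(t_2)|$; the converse is tautological. There is no real obstacle here — the lemma is a direct computation — the only thing that needs care is the sign convention in the primitive: when $\gamma_2$ fails to be monotone, it is the signed integral $\int_{[0,1]} e^{\gamma_1}\,d\gamma_2$ (rather than an absolute length) that records the action difference between the two boundary components, which is exactly what is stated.
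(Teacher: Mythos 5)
Your proof is correct, and it is essentially the intended argument: the paper states this lemma without proof (treating it as an elementary computation going back to Mohnke's construction), and the verification it implicitly relies on is exactly your pullback computation $L_{\Lambda,\gamma}^*(e^\tau\alpha)=e^{\gamma_1(t)}\gamma_2'(t)\,dt$, which uses only $(\phi^s_{R_\alpha})^*\alpha=\alpha$ and $T\Lambda\subset\ker\alpha$ and simultaneously yields the Lagrangian condition, the primitive $h$, and its boundary values. The componentwise unpacking of the double-point condition and your remark that the boundary action difference is the \emph{signed} integral $\int_{[0,1]}e^{\gamma_1}\,d\gamma_2$ are likewise exactly as needed for the later applications.
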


Note that the above lemma can be used also for constructing Lagrangian immersions of $S^1 \times \Lambda$; it suffices to consider curves $\gamma$ that have images that are smooth immersions of closed curves.

\subsection{Legendrians in complements of overtwisted discs}
\label{sec:destabilising}

A three-dimensional contact manifold is called {\bf overtwisted} if it contains an overtwisted disc; see \cite{Eliashberg:Overtwisted}. The standard model of the overtwisted disc can be described as follows. Consider the standard overtwisted contact structure on the three-dimensional vector space
$$\left(\RR^2_{r,\theta}\times \RR_z,\alpha_{ot}=\cos{r}\,dz+r\sin{r}\,d\theta\right),$$
where $(r,\theta)$ denotes polar coordinates on $\RR^2$. Note that this contact form is invariant under translations of the $z$-coordinate. The standard overtwisted disc is the germ of the contact structure defined on the disc
$$D^2_{ot} \coloneqq \{r \le 2\pi, \: z=0\} \subset \RR^3.$$
Note that the boundary
$$\Lambda_0 \coloneqq \partial D^2_{ot}$$
is Legendrian, and that $\partial_z$ is a contact vector field transverse to $D^2_{ot}$ that coincides with the Reeb vector-field for $\alpha_{ot}$ along $\Lambda_0$.

It is a well-known fact that any Legendrian contained inside the complement of an overtwisted disc is Legendrian isotopic to a stabilised Legendrian; see work by Eliashberg--Fraser \cite{EliashbergFraser} as well as Dymara \cite{Dymara} for an even stronger statement. We present this result by using the construction of \emph{cusp connected sum}, that was introduced in \cite{OnConnectedSum} by Etnyre--Honda. More precisely, we here use the formulation from \cite{Dimitroglou:Ambient}, where the cusp connected sum of two Legendrians knots is constructed from the data of the choice of an embedded Legendrian arc $\eta$ with boundary on the two Legendrians, where the tangents at the two boundary points are assumed to be normal to the Legendrians. Given two disjoint Legendrians $\Lambda_i$, $i=0,1$, and such an embedded Legendrian arc $\eta$, we obtain $\Lambda_0 \,\sharp_\eta\, \Lambda_1$ which is constructed in an arbitrarily small neighbourhood of $\Lambda_0 \cup \Lambda_1 \cup \eta$; see Figure \ref{fig:cuspconnectsum} for a description in terms of a locally defined front projection.

Recall the following standard result for the cusp-connected sum with the standard Legendrian unknot $\Lambda_{std}$.
\begin{lma}
\label{lma:conn}
 Consider a standard Legendrian unknot $\Lambda_{std}$ that is contained in a Darboux ball $B$ that is disjoint from a second Legendrian $\Lambda$, and in which $\Lambda_{std} \cup \eta$ is contactomorphic to the configuration shown in Figure \ref{fig:unknot}. Then, the cusp connected sum $\Lambda \,\sharp_\eta\, \Lambda_{std}$ is Legendrian isotopic to $\Lambda$ by a Legendrian isotopy that can be assumed to have support in an arbitrarily small neighbourhood of $\eta \cup B$.
\end{lma}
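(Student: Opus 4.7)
The plan is to produce the Legendrian isotopy via an explicit two-stage contraction in the front projection, arguing that the cusp-connected sum with a standard unknot amounts to attaching a tiny Legendrian ``bubble'' to $\Lambda$ along $\eta$ that can be shrunk away by a compactly supported isotopy. First I would set up local coordinates on the Darboux ball $B$ using Figure \ref{fig:unknot}, so that $\eta$ is a short Legendrian segment whose tangent at each endpoint is normal to the adjoining Legendrian, and so that $\Lambda_{std}$ appears as the standard lens-shaped front with two cusps, one of them meeting $\eta$. Then I would invoke the definition of $\sharp_\eta$ from Figure \ref{fig:cuspconnectsum}, which locally replaces the two endpoints of $\eta$ by a smooth Legendrian bridge (a narrow strip bounded by two new outer cusps).

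Having made these local identifications, $\Lambda \,\sharp_\eta\, \Lambda_{std}$ agrees with $\Lambda$ outside a small neighbourhood of $\eta \cup B$ and differs from it there only by the presence of a thin ``finger'' that follows $\eta$ out of $\Lambda$ and terminates in the residual lens-shaped loop coming from $\Lambda_{std}$. The desired isotopy will be built in two stages. In stage one, I would scale the lens-shaped loop linearly toward a point: since the standard unknot front is scale-invariant in the appropriate sense, the rescaled fronts still satisfy the slope condition $p=dz/dq$ away from cusps, and the two cusps persist throughout but move inward until they annihilate against the bridge, leaving only the thin finger glued to $\Lambda$. In stage two, I would retract this finger back along $\eta$ into $\Lambda$; locally, this is a Legendrian analogue of cancelling a trivial one-handle and can be written down as an explicit family of fronts that collapses the bridge smoothly, ending precisely at $\Lambda$.

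Both stages are supported in an arbitrarily small neighbourhood of $\eta \cup B$ by construction, and by the standard extension theorem for Legendrian isotopies the concatenation is induced by a contact Hamiltonian $H_t$ that is supported in a prescribed neighbourhood of $\eta \cup B$ as well. The main obstacle I expect is verifying the Legendrian condition at every moment of each stage, particularly during the first stage when the cusps collide with the bridge. This is handled by choosing the model fronts to match along a common germ near the bridge, so that the contraction is really a rescaling of a fixed, compactly supported deformation of the standard unknot front relative to its attaching arc. With that bookkeeping, the concatenated isotopy provides the required Legendrian isotopy from $\Lambda \,\sharp_\eta\, \Lambda_{std}$ to $\Lambda$.
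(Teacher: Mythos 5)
The paper offers no proof of this lemma; it is recalled as a standard fact about the cusp connected sum (in the spirit of Etnyre--Honda \cite{OnConnectedSum}), so your attempt has to be judged on its own terms. Your overall strategy --- realise $\Lambda\,\sharp_\eta\,\Lambda_{std}$ in the front projection as $\Lambda$ with a thin retractable finger along $\eta$, then push the finger back in --- is indeed the standard route. But stage one as written contains a genuine error: you let the two cusps of the residual lens-shaped loop ``move inward until they annihilate.'' A pair of cusps of a front cannot be cancelled by a Legendrian isotopy unless a crossing disappears with them, because the Thurston--Bennequin invariant satisfies $tb = w - \tfrac12(\#\text{cusps})$; removing two cusps while leaving the writhe unchanged would raise $tb$ by $1$. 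Worse, your argument never uses that $\Lambda_{std}$ is the \emph{standard} ($tb=-1$) unknot rather than a stabilised one: applied verbatim to $\Lambda_{std}^{stab}$ it would ``prove'' that $\Lambda\,\sharp_\eta\,\Lambda_{std}^{stab}$ is Legendrian isotopic to $\Lambda$, i.e.\ that stabilisations can be undone by isotopy. That is false, and it would trivialise Lemma \ref{lma:stabilisedunknot} and the role of overtwistedness throughout Section \ref{sec:destabilising}. So the step proves too much and cannot be a Legendrian isotopy.

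The correct bookkeeping is that one cusp of $\Lambda_{std}$ is already consumed when the connect-sum band is formed (this is exactly the source of the formula $tb(\Lambda_1\,\sharp\,\Lambda_2)=tb(\Lambda_1)+tb(\Lambda_2)+1$), so the protrusion attached to $\Lambda$ is a finger terminating in a \emph{single} cusp; note that two cusp-free parallel strands cannot simply close up at the tip, since a front admits no vertical tangencies. The retraction is then one planar isotopy of fronts in which that tip cusp travels back along $\eta$ and is reabsorbed at the attaching site on $\Lambda$, with the numbers of cusps and crossings constant throughout --- consistent with the invariance of the classical invariants. With the finger described this way, your stage two is essentially the entire proof, and your remarks about checking embeddedness of the intermediate Legendrians and about promoting the front isotopy to an ambient contact isotopy supported near $\eta\cup B$ do go through. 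As written, however, stage one is not a Legendrian isotopy and the argument does not stand.
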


We also recall that any Legendrian knot $\Lambda$ has canonically defined (positive and negative) {\bf stabilisations} $\Lambda^{stab}$ constructed in an arbitrarily small standard contact jet-neighbourhood $J^1\Lambda \supset \Lambda$. The local modifications can be described by the front projection to the right in Figure \ref{fig:stab} or by its reflection under $z\mapsto -z$, depending on the sign of the stabilisation. (We will not need to care about the sign of the stabilisation here.)

\begin{figure}[htp]
\centering
\vspace{3mm}
\labellist
	\pinlabel $z$ at 1 46
	\pinlabel $\Lambda_0$ at 52 40
	\pinlabel $\Lambda_1$ at 121 40
\pinlabel $\Lambda_0\,\sharp_\eta\,\Lambda_1$ at 197 35
 	\pinlabel $\eta$ at 85 32
\pinlabel $q$ at 27 18
	\endlabellist
\includegraphics[width=12cm]{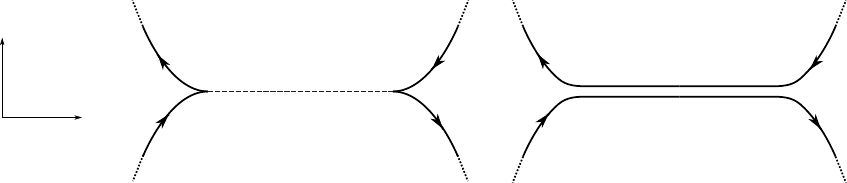}
\caption{Left: The front projection of the local model of a Legendrian arc $\eta$ (shown as a dashed blue line) with boundary on a pair of Legendrians $\Lambda_0 \sqcup \Lambda_1$. Right: the front projection of the result of a cusp connected sum $\Lambda_0 \,\sharp\, \Lambda_1$.}
\label{fig:cuspconnectsum}
\end{figure}

\begin{figure}[htp]
\centering
\vspace{3mm}
\labellist
\pinlabel $z$ at 1 38
\pinlabel $q$ at 27 10
\pinlabel $\Lambda_{std}$ at 95 32
\pinlabel $\eta$ at 136 18
	\endlabellist
\includegraphics[height=2cm]{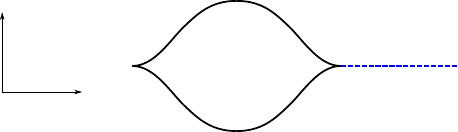}
\caption{The front projection of the standard Legendrian unknot contained inside a Darboux ball $(\R^3_{q,p,z},dz-p\,dq)$. The dashed Legendrian arc $\eta$ can be used for performing a connected sum $\Lambda \,\sharp_\eta\,\Lambda_{std}$ that preserves the Legendrian isotopy class of $\Lambda$.}
\label{fig:unknot}

\end{figure}

\begin{lma}[Theorem 2.1 in \cite{Murphy:Plastik}]
 \label{lma:stabilisedunknot}
 \begin{enumerate}
 \item Let $\Lambda_{0} \subset (Y^3,\alpha)$ be the boundary of an overtwisted disc. The standard Legendrian unknot contained in a Darboux ball is contact isotopic to the Legendrian $\Lambda_0^{stab}$ obtained from $\Lambda_0$ by one stabilisation.
 \item Any Legendrian $\Lambda$ that is contained in the complement of an overtwisted disc is Legendrian isotopic to a cusp connected sum $\Lambda \,\sharp_\eta\, \Lambda_0^{stab}$ for a suitable choice of Legendrian arc $\eta$ (see Lemma \ref{lma:conn}). The latter, in turn, lives in the same Legendrian isotopy classes as the stabilisations $(\Lambda \,\sharp_\eta \, \Lambda_0)^{stab}$ and $\Lambda^{stab} \,\sharp_\eta \, \Lambda_0$
 \end{enumerate}
\end{lma}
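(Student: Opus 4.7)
The plan is to derive part (1) from the Eliashberg--Fraser classification of Legendrian unknots in overtwisted contact three-manifolds \cite{EliashbergFraser}, and then to deduce part (2) as a formal consequence of part (1) together with Lemma \ref{lma:conn}.

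For part (1), the Eliashberg--Fraser theorem asserts that two \emph{loose} Legendrian unknots (i.e.\ Legendrian unknots admitting an overtwisted disc in the complement) in an overtwisted contact three-manifold are Legendrian isotopic if and only if they share the same pair $(tb,rot)$ of classical invariants. The two hypotheses to verify are therefore: (i) the classical invariants of $\Lambda_0^{stab}$ match those of $\Lambda_{std}$, and (ii) both unknots are loose. Direct computation in the standard overtwisted model $(\RR^2_{r,\theta}\times\RR_z,\cos r\,dz+r\sin r\,d\theta)$ yields $tb(\Lambda_0)=0$, since along $\Lambda_0$ the Reeb direction is $\partial_z$ and its pushoff is unlinked from $\Lambda_0$ relative to the overtwisted disc, and $rot(\Lambda_0)=\pm 1$ by the Euler-class identity $\chi(D^2_{ot})=e(\xi|_{D^2_{ot}})+rot(\Lambda_0)$. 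Hence, with a suitable choice of sign of stabilisation, $\Lambda_0^{stab}$ has classical invariants $tb=-1$ and $rot=0$, matching $\Lambda_{std}$. Looseness of $\Lambda_{std}$ is immediate by placing its Darboux ball disjoint from some fixed overtwisted disc. For $\Lambda_0^{stab}$, looseness is seen by exploiting the $z$-translation symmetry of the standard overtwisted model: a parallel copy $\{r\le 2\pi,\,z=c\}$ of the original overtwisted disc, shifted by an amount $|c|$ exceeding the size of the stabilisation zig-zag, is again overtwisted and lies in the complement of $\Lambda_0^{stab}$.

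For part (2), apply part (1) to obtain an ambient contact isotopy $\Phi_t$ of $(Y,\alpha)$ with $\Phi_0=\id$ and $\Phi_1(\Lambda_0^{stab})=\Lambda_{std}$ inside a Darboux ball $B$. Since the isotopy can be localised in an arbitrarily small neighbourhood of the original overtwisted disc together with $B$, and $\Lambda$ lies in the complement of this overtwisted disc, we may arrange the support of $\Phi_t$ to be disjoint from $\Lambda$, so that $\Phi_1(\Lambda)=\Lambda$. Choose a Legendrian arc $\eta'$ from $\Lambda$ to $\Lambda_{std}$ with $\Lambda_{std}\cup\eta'$ realising the standard configuration of Figure \ref{fig:unknot}, and set $\eta\coloneqq\Phi_1^{-1}(\eta')$. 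By Lemma \ref{lma:conn}, the cusp connected sum $\Lambda\,\sharp_{\eta'}\,\Lambda_{std}$ is Legendrian isotopic to $\Lambda$; since $\Phi_1$ is a contactomorphism mapping $\Lambda\,\sharp_\eta\,\Lambda_0^{stab}$ to $\Lambda\,\sharp_{\eta'}\,\Lambda_{std}$ while fixing $\Lambda$, we conclude that $\Lambda\,\sharp_\eta\,\Lambda_0^{stab}\simeq\Lambda$.

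The remaining identities $\Lambda\,\sharp_\eta\,\Lambda_0^{stab}\simeq(\Lambda\,\sharp_\eta\,\Lambda_0)^{stab}\simeq\Lambda^{stab}\,\sharp_\eta\,\Lambda_0$ follow from the local nature of stabilisation combined with the fact that $\Lambda\,\sharp_\eta\,\Lambda_0$ is a single connected Legendrian along which a stabilisation zig-zag can be freely slid by a Legendrian isotopy: performing the stabilisation inside the $\Lambda_0$ summand recovers $\Lambda\,\sharp_\eta\,\Lambda_0^{stab}$, while performing it inside the $\Lambda$ summand yields $\Lambda^{stab}\,\sharp_\eta\,\Lambda_0$, and both represent $(\Lambda\,\sharp_\eta\,\Lambda_0)^{stab}$ up to Legendrian isotopy. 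The hard part of the plan is the looseness of $\Lambda_0^{stab}$: it is the geometric content that allows the flexibility of the overtwisted setting to erase the distinction between $\Lambda_0^{stab}$ and a tight Darboux unknot, and corresponds precisely to Theorem 2.1 of \cite{Murphy:Plastik}.
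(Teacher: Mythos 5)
Your part (2) follows essentially the same route as the paper: both arguments reduce to Lemma \ref{lma:conn} by arranging the pair $(\Lambda_0^{stab},\eta)$ into the standard configuration of Figure \ref{fig:unknot} by an ambient contact isotopy supported away from $\Lambda$, and both obtain the remaining identities by sliding the stabilisation zig-zag across the connected-sum region. For part (1), however, you take a genuinely different route. The paper argues directly with convex surface theory: $D^2_{ot}$ is convex with a single closed dividing curve, so the stabilisation of $\Lambda_0$ can be realised as a smoothed semicircle of $\partial D^2_{ot}$ union a diameter, which visibly bounds a half-disc contained in a Darboux ball and is therefore the standard ${\tt tb}=-1$ unknot; no classification theorem is invoked. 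You instead compute the classical invariants $(tb,rot)=(0,\pm 1)$ of $\Lambda_0$ (correct), check looseness of both unknots (your $z$-translation trick is fine, and is the same mechanism the paper uses in the proof of Lemma \ref{lma:reversing}), and then appeal to the Eliashberg--Fraser classification. The one point you should tighten is the precise form of that classification: \cite{EliashbergFraser} gives a \emph{coarse} classification of loose unknots (up to contactomorphism, and in $S^3$), whereas the lemma asserts that the two knots are \emph{contact isotopic} in an arbitrary overtwisted $(Y^3,\alpha)$. For the isotopy statement one needs the refinement due to Dymara \cite{Dymara} (and Ding--Geiges), whose hypotheses include an overtwisted disc in the complement of both knots simultaneously; this can be arranged here by yet another parallel translate of $D^2_{ot}$ placed away from the Darboux ball, but you do not verify it. With that hypothesis checked your argument goes through; it buys generality (it identifies $\Lambda_0^{stab}$ with \emph{any} loose unknot having the right invariants) at the cost of a substantially heavier input, while the paper's explicit realisation is self-contained and elementary.
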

\begin{proof}
(1): The standard overtwisted disc $D^2_{ot}$ is convex in the sense of Giroux \cite{Giroux:Convexite} and has a single closed dividing curve. The stabilisation of $\Lambda_0=D^2_{ot}$ can thus be realised as (a smoothed version of) a semicircle in the boundary of $D^2_{ot}$ union a diameter . This is the standard unknot of ${\tt tb}=-1$, and the half-disc is contained inside a Darboux ball.

(2): It suffices to find an arc $\eta$ as in Lemma \ref{lma:conn} for which $\eta \cup \Lambda_0^{stab}$ still contains a stabilisation (i.e.~so that the latter union of Legendrians can be destabilised). This can be done by making sure that the arc $\eta$ only touches $D^2_{ot}$ at a single boundary point, i.e.~at its endpoint located in the standard Legendrian unknot.
\end{proof}

A crucial tool in this paper is the following existence of positive loops of stabilised Legendrians. The first version of existence of such a positive loop was shown by Colin--Ferrand--Pushkar in \cite[Theorem 3.i]{Colin:Positive}; in addition see \cite[Example 1.3]{Dimitroglou:Positive}. Similar results have more recently been proven for arbitrarily loose Legendrians in higher dimension by Liu \cite{Liu:Positive}.

\begin{lma}
\label{lma:posloop}
The Legendrian isotopy class obtained by stabilisation of the zero-section $j^10 \subset (J^1S^1,\ker\alpha_0)$ has a representative $(j^10)^{stab}$ that is contained inside a subset of the form
$$\left\{p \in (0,\epsilon), \: z \in (-\epsilon,\epsilon) \right\} \subset J^1S^1,$$
where $\epsilon>0$ can be taken to be arbitrarily small.

The contact isotopy $\phi^t(\theta,p,z)=(\theta-t,p,z)$ satisfies $\phi^{2\pi}=\phi^0=\id$ and is generated by the autonomous contact Hamiltonian $H = p$ which is positive in the subset $\{ p > 0\}$. In particular, it induces a positive loop of the aforementioned Legendrian $(j^10)^{stab}$.
\end{lma}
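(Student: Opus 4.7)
My plan is to handle the two assertions of the lemma separately: constructing a representative $(j^10)^{stab}$ of the stabilised zero-section inside the prescribed thin strip, and then verifying the assertions about the isotopy $\phi^t$.

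For the first assertion, I will write down an explicit parametrization $t \mapsto (\theta(t), p(t), z(t))$, $t \in \R/2\pi\Z$, and impose the Legendrian condition $\dot z = p\,\dot\theta$. Take $\theta(t) = t + 3\sin t$, so that $\dot\theta = 1+3\cos t$ changes sign twice per period, producing exactly one zigzag (two non-degenerate cusps) in the front while the projection onto $S^1_\theta$ still has degree one. Take $p(t) = 3 - 2\cos t$, which is strictly positive. Then $p\,\dot\theta = 7\cos t - 3\cos 2t$ has vanishing mean over $[0, 2\pi]$, so integrating yields a smooth $2\pi$-periodic $z(t) = 7\sin t - \tfrac{3}{2}\sin 2t$, and the parametrization defines a Legendrian embedding of $S^1$ into $J^1S^1$. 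Injectivity can be verified by noting that $p(t_1) = p(t_2)$ forces $t_2 = \pm t_1$ and then analysing the resulting $\theta$- and $z$-values. Finally, the map $(\theta, p, z) \mapsto (\theta, \lambda p, \lambda z)$ rescales $\alpha_0$ by $\lambda$ and is therefore a contactomorphism of $(J^1S^1, \ker\alpha_0)$; applying it with $\lambda > 0$ sufficiently small places the image inside $\{0 < p < \epsilon,\: |z| < \epsilon\}$.

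To identify this Legendrian with the stabilisation of $j^10$, I would deform the amplitude $a$ in the family $\theta_a(t) = t + a\sin t$, adjusting the coefficients of $p_a$ so that $z_a$ remains periodic. For $a > 1$ the Legendrian has two cusps (as above); at $a = 1$ the cusps collide in a single birth/death bifurcation; for $a < 1$ the Legendrian is graphical, hence Legendrian isotopic to $j^10$. Crossing the single bifurcation at $a = 1$ corresponds to exactly one stabilisation, placing the constructed representative in the class $(j^10)^{stab}$. The sign of the stabilisation is immaterial for the conclusion, as both signs admit analogous positive loops.

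For the second assertion, a direct computation gives $(\phi^t)^* \alpha_0 = \alpha_0$, so $\phi^t$ is a strict contactomorphism, and $\phi^{2\pi} = \id$ since $\theta$ lives on $S^1$. The infinitesimal generator is $X = -\partial_\theta$, so the associated contact Hamiltonian is $\alpha_0(X) = (dz - p\,d\theta)(-\partial_\theta) = p$. Since $\phi^t$ preserves the subset $\{p > 0\}$ setwise and $H = p$ is strictly positive there, the restriction $t \mapsto \phi^t((j^10)^{stab})$ is a positive loop of Legendrians based at $(j^10)^{stab}$. The only non-routine ingredient is the identification of the Legendrian isotopy class in the first part, which is dispatched by the cusp-bifurcation argument above.
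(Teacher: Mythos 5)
Your proposal is correct and follows essentially the same route as the paper: a zig-zag representative of $(j^10)^{stab}$ whose front has everywhere positive slope, followed by the rescaling contactomorphism $(\theta,p,z)\mapsto(\theta,\lambda p,\lambda z)$ to fit it into the prescribed strip, together with the same direct computation showing that $\theta$-translation is generated by the contact Hamiltonian $H=p$. The only difference is that the paper cites a known example and its figure for the zig-zag representative, whereas you supply explicit formulas and a cusp-birth bifurcation argument to identify the isotopy class; both accomplish the same thing.
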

\begin{proof}
As described in \cite[Example 1.3]{Dimitroglou:Positive}, the once stabilised zero section admits a representative where the front has a zig-zag; see Figure \ref{fig:stab}. One can make sure that the slope of the front is everywhere contained inside $(0,a)$ after a Legendrian isotopy. This means that the Legendrian is contained inside $\{p \in (0,a)\}$. It finally suffices to apply the contactomorphism $((\theta,p,z) \mapsto (\theta,e^{-t}p,e^{-t}z)$ for $t \gg 0$ to move the Legendrian into the sought subset.
\end{proof}

\begin{figure}[htp]
\centering
\vspace{3mm}
\labellist
	\pinlabel $z$ at 1 76
	\pinlabel $j^10$ at 52 40
 	\pinlabel $(j^10)^{stab}$ at 175 50
 	\pinlabel $z$ at 143 76
\pinlabel $\theta=2\pi$ at 94 8
\pinlabel $\theta=2\pi$ at 229 8
	\endlabellist
\includegraphics[width=12cm]{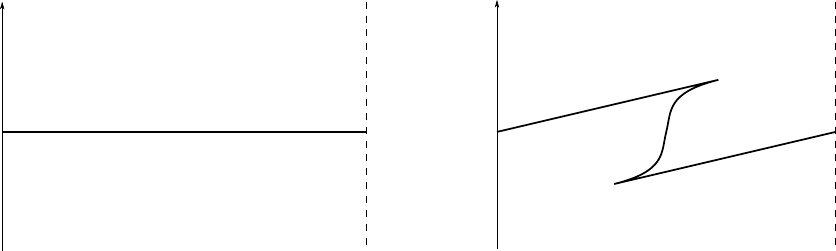}
\caption{Left: The front projection of the zero section $j^10 \subset J^1S^1$. Right: The front projection of a representative of the stabilisation of $(j^10)^{stab}\subset J^1S^1$ of the zero-section contained in the subset $p>0$. Analogous representatives exist also for the stabilisation of the opposite sign.}
\label{fig:stab}

\end{figure}

Finally we also need the following result.
\begin{lma}
\label{lma:reversing}
Let $\Lambda_{0} \subset (Y^3,\alpha)$ be the boundary of an overtwisted disc in a contact manifold. There exists a contact isotopy that reverses the orientation of $\Lambda_0$. Moreover, there exists such an isotopy that is contained inside an arbitrarily small neighbourhood of the overtwisted disc.
\end{lma}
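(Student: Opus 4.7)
The plan is to combine the formal classification of Legendrian knots in overtwisted three-manifolds with the Legendrian isotopy extension theorem. By the contact neighborhood theorem for overtwisted discs, a prescribed neighborhood $N$ of $D^2_{\mathrm{ot}}$ in $(Y,\alpha)$ is contactomorphic to a neighborhood of the model disc in $(\RR^3,\alpha_{\mathrm{ot}})$, so it suffices to construct the contact isotopy inside the standard model. In that model, I would first note that the involution $\tau(r,\theta,z)=(r,-\theta,-z)$ satisfies $\tau^*\alpha_{\mathrm{ot}}=-\alpha_{\mathrm{ot}}$ and hence preserves $\xi_{\mathrm{ot}}=\ker\alpha_{\mathrm{ot}}$ while reversing its co-orientation; it fixes $D^2_{\mathrm{ot}}$ setwise and reverses $\Lambda_0$. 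Although $\tau$ is not in the identity component of the contactomorphism group, its existence implies that $\Lambda_0$ and $-\Lambda_0$ share identical formal Legendrian data (smooth isotopy class, Thurston--Bennequin invariant, and rotation number).

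Second, by the Eliashberg--Fraser (or Dymara) h-principle for Legendrian knots in overtwisted contact three-manifolds, two Legendrians with the same formal data are Legendrian isotopic. Since the neighborhood $N$ itself is overtwisted (it contains $D^2_{\mathrm{ot}}$), the h-principle can be applied internally to produce a Legendrian isotopy $\Lambda_t$ inside $N$ from $\Lambda_0$ to $-\Lambda_0$. Concretely, one can also realise this by the following explicit chain: use Lemma~\ref{lma:stabilisedunknot} to isotope $\Lambda_0$ within $N$ so that the motion introduces a pair of cancelling stabilisations localised in a small Darboux ball $B\subset N$; inside $B$ the resulting balanced-stabilised configuration has vanishing rotation number and its orientation reversal can be realised by an explicit compactly supported contact isotopy of $B$ (coming from the symmetry $(q,p,z)\mapsto(-q,-p,z)$ of $(\RR^3,dz-p\,dq)$, which restricted to $\Lambda_0^{stab}$ is implementable by a compactly supported contact isotopy using the overtwisted flexibility near $B$); finally, undo the preparatory move in reverse.

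Third, apply the standard Legendrian isotopy extension theorem to lift $\Lambda_t$ to an ambient contact isotopy $\phi^t$ supported in an arbitrarily small tubular neighborhood of the trace $\bigcup_t\Lambda_t$, and hence inside $N$.

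The main obstacle is keeping the intermediate Legendrians $\Lambda_t$ inside $N$: the classification h-principle is stated ambiently, so one must verify that the construction can be carried out inside the overtwisted sub-manifold $N$. This is controlled by fixing the auxiliary Darboux ball $B$ inside $N$ in advance and performing every stabilisation, reversal, and destabilisation within a small sub-neighborhood of $D^2_{\mathrm{ot}}\cup B$, exploiting that the overtwisted disc providing the flexibility already lives in $N$.
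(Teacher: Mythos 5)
Your argument breaks down at the second step, where you assert that $\Lambda_0$ and its orientation reverse ``share identical formal Legendrian data (smooth isotopy class, Thurston--Bennequin invariant, and rotation number).'' The involution $\tau(r,\theta,z)=(r,-\theta,-z)$ does satisfy $\tau^*\alpha_{ot}=-\alpha_{ot}$, but precisely because of that minus sign it reverses the orientation of the contact planes (it pulls $d\alpha_{ot}|_{\xi}$ back to its negative). Since the rotation number is computed with respect to the orientation of $\xi$, conjugating by $\tau$ only reproves the tautology ${\tt rot}(\overline{\Lambda_0})=-{\tt rot}(\Lambda_0)$; it does not show the two rotation numbers are equal. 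And they are not equal here: the overtwisted disc is convex with a single closed dividing curve, so ${\tt tb}(\partial D^2_{ot})=0$ and ${\tt rot}(\partial D^2_{ot})=\chi(D_+)-\chi(D_-)=\pm 1\neq 0$. Consequently the formal invariants of $\Lambda_0$ and $\overline{\Lambda_0}$ genuinely differ, and the coarse classification of loose Legendrian knots (Eliashberg--Fraser, Dymara) cannot be invoked to connect them; inside a small neighbourhood of the disc, where $H_2$ vanishes and ${\tt rot}$ is a well-defined Legendrian isotopy invariant, it points in the opposite direction. Your fallback ``explicit chain'' has the same defect: a pair of cancelling stabilisations leaves the rotation number at $\pm 1$, not $0$, and the rotation $(q,p,z)\mapsto(-q,-p,z)$ of a Darboux ball only reverses the orientation of Legendrians with vanishing rotation number. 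The rotation-number bookkeeping is the entire difficulty of this lemma, and your proof never engages with it.

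The paper's own proof takes a completely different route that does not pass through the classification theorem: it pushes the overtwisted disc off itself to obtain a second boundary $\Lambda_0^+$, and then runs a connected-sum swindle
$\Lambda_0\sim\Lambda_0\,\sharp_\eta\,(\Lambda_0^+)^{stab}\sim\Lambda_0\,\sharp_{\eta^-}\,\overline{(\Lambda_0^+)^{stab}}\sim\Lambda_0^{stab}\,\sharp_{\eta^-}\,\overline{\Lambda_0^+}\sim\overline{\Lambda_0^+}\sim\overline{\Lambda_0}$,
using Lemma \ref{lma:stabilisedunknot} (a suitably stabilised boundary of an overtwisted disc is a standard unknot in a Darboux ball, where ${\tt rot}=0$ and orientation reversal is an honest compactly supported contact isotopy) together with sliding the stabilisation across the connect-sum handle. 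If you want to repair your write-up you will have to adopt some such mechanism, or else explain explicitly how the passage from ${\tt rot}=+1$ to ${\tt rot}=-1$ is achieved; note that the sign of the stabilisation being created, reversed, slid, and then cancelled is exactly where this must be accounted for, and it deserves careful tracking in any version of the argument.
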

\begin{proof}
We will prove the statement in the case when $Y$ is an arbitrarily small neighbourhood of the overtwisted disc.

Any overtwisted disc can be pushed off from itself by a contact isotopy whose infinitesimal generator is transverse to the disc; one can simply take the translation of the $z$-coordinate in the standard model of the overtwisted disc described above.

Consider the boundary $\Lambda_0^+$ of a second overtwisted disc that is obtained by the above displacement by a contact isotopy. Let $\eta$ be a Legendrian arc that connects $\Lambda_0$ to $\Lambda_0^+$. We orient $\Lambda_0$ and $\Lambda_0^+$ so that the orientations are compatible with the operation of cusp connected sum $\Lambda_0 \,\sharp_{\eta}\, \Lambda_0^+$ as shown in Figure \ref{fig:cuspconnectsum}.

We may choose a single arc $\eta$ in neighbourhoods of the two disjoint overtwisted discs, so that Lemma \ref{lma:conn} can be applied to give a Legendrian isotopy
\begin{equation}
\label{eq:one}
\Lambda_0 \,\sharp_{\eta}\, (\Lambda_0^+)^{stab} \sim_{Leg.} \Lambda_0
\end{equation}
that fixes some non-empty open subset of $\Lambda_0$, as well as a Legendrian isotopy
\begin{equation}
\label{eq:two}
(\Lambda_{ 0})^{stab} \,\sharp_{\eta}\, \Lambda_0^+ \sim_{Leg.} \Lambda_0^+
\end{equation}
that fixes some non-empty open subset of $\Lambda_0^+$. The key point is that the two standard unknots $(\Lambda_0)^{stab}$ and $(\Lambda_0^+)^{stab}$ are contained inside disjoint Darboux balls (that live inside arbitrarily small neighbourhoods of the two disjoint overtwisted discs bounded by $\Lambda_0$ and $\Lambda_0^+$, respectively ). The sought isotopies are then constructed by Lemma \ref{lma:stabilisedunknot}.

Recall that the standard Legendrian unknot $(\Lambda_0^+)^{stab}$ admits an orientation-reversing contact isotopy inside the Darboux ball (it is induced by the contact lift of a rotation of the $(p,q)$-coordinates). Let $\eta'$ be the image of the Legendrian arc $\eta$ under this contact isotopy. For an oriented Legendrian $\Lambda$, we denote by $\overline{\Lambda}$ the same Legendrian but with the opposite orientation.

The Legendrian $\Lambda_0$ is Legendrian isotopic to $\Lambda_0 \,\sharp_{\eta}\, (\Lambda_0^+)^{stab}$ by \eqref{eq:one} above. The latter admits a Legendrian isotopy to $\Lambda_0 \,\sharp_{\eta^-}\, \overline{(\Lambda_0^+)^{stab}}$ where the orientation of the second summand has been reversed. Here the arc $\eta^-$ is the image of the arc $\eta$ under the isotopy, and we may assume that the equality $\eta^-=\eta$ holds in a neighbourhood of the initial overtwisted disc with boundary $\Lambda_0$. 

Since the stabilisation of the latter summand in $\Lambda_0 \,\sharp_{\eta-}\, \overline{(\Lambda_0^+)^{stab}}$ can be assumed to be very small and performed away from the arc $\eta^-$, it is thus possible to move it across the handle in the surgery region to the first summand. In other words: the latter Legendrian is Legendrian isotopic to $\Lambda_0^{stab} \,\sharp_{\eta^-}\, \overline{\Lambda_0^+}$, where the stabilisation has been performed on the first summand. By \eqref{eq:two} above, the latter is, in turn, Legendrian isotopic to $\overline{\Lambda_0^+}$. (Here we have used the property that $\eta^-$ coincides with $\eta$ near $\Lambda_0$.) 

It is finally just a matter of moving $\overline{\Lambda_0^+}$ back to $\overline{\Lambda_0}$ by time-reversing the original displacement of the overtwisted disc. Thus we have managed to create the sought orientation reversing Legendrian isotopy of $\Lambda_0$.
\end{proof}

\section{Proof of Theorem \ref{thm:main}}

The symplectisations induced by two different choices of contact forms are exact symplectomorphic; we are thus free to make a choice a contact form that fits our purposes. We start by fixing a three-dimensional overtwisted contact three-manifold $(Y,\alpha)$ endowed with some choice of contact form.

\subsection{Exact embeddings of tori}
\label{sec:torus}

Let $\Lambda \subset (Y,\alpha)$ be a Legendrian knot whose complement is overtwisted. Consider a closed embedding
$$\gamma_\lambda \colon S^1 \hookrightarrow \R^2_{(x_1,x_2)}$$
of a curve depending on $\lambda \ge 0$ whose image:
\begin{itemize}
\item is contained inside the subset $[-1,\lambda+1]\times[0,\epsilon]$; and
\item coincides with the two arcs $\{x_2=0,\epsilon\}$ inside the subset $\{x_1 \in [0,\lambda]\}$.
\end{itemize}
Note Lemma \ref{lma:immersion} applied to the closed curve $\gamma_\lambda$ and the Legendrian $\Lambda$ produces a Lagrangian immersion
$$L_\lambda \coloneqq L_{\Lambda,\gamma_\lambda} \subset S(Y,\alpha)$$
of the torus $\gamma_\lambda \times \Lambda\cong \T^2$. Moreover, if we take $\epsilon>0$ in the construction to be strictly smaller than the length of any Reeb chord on $\Lambda$, then the torus produced by the lemma is in fact embedded.

Recall that $L_\lambda$ can be parametrised by
$$ (\theta_1,\theta_2) \mapsto \left(\gamma_1(\theta_2),\phi^{\gamma_2(\theta_2)}_{R_\alpha}\left(\Lambda(\theta_1)\right)\right),$$
where $\gamma(\theta_2)=(\gamma_1(\theta_2),\gamma_2(\theta_2))$ is a parametrisation of $\gamma$ and $\Lambda(\theta_1)$ is a parametrisation of $\Lambda$.
\begin{rmk} The above Lagrangian torus is never exact. However, since $\alpha$ vanishes along any curve of the form $\eta=\{\theta_2=c\}$ (this follows from the property that $\Lambda$ is Legendrian), we conclude that the symplectic action vanishes along curves of this type, i.e.~$\int_\eta e^\tau\alpha=0$.
\end{rmk}

The next step is deforming the Lagrangian torus $L_\lambda$ to an exact Lagrangian embedding, without changing its Lagrangian isotopy class.

Let $U\subset Y$ be a standard jet-neighbourhood of $\Lambda$ that contains $\phi^{[-\epsilon,\epsilon]}_{R_\alpha}(\Lambda)$ (recall that $\epsilon>0$ is small). Consider the Legendrian boundary $\Lambda_0 \subset Y \setminus U$ of an overtwisted disc contained inside the complement $Y \setminus U$ of this jet-neighbourhood. This is possible since $Y\setminus \Lambda$ is assumed to be overtwisted.

Then, consider the Legendrian knot produced by the cusp connected sum
$$\Lambda' \coloneqq (\Lambda_0)^{stab} \,\sharp_\eta\, \phi^{-\epsilon}_{R_\alpha}(\Lambda)$$
of $(\Lambda_0)^{stab} \subset Y \setminus U$, i.e.~the stabilisation of the boundary $\Lambda_0$ of the overtwisted disc , and the Legendrian $\phi^{-\epsilon}_{R_\alpha}(\Lambda)$ for a suitable Legendrian arc $\eta$ with boundary on the two Legendrians; see Subsection \ref{sec:destabilising} for more details. The Legendrian arc $\eta$ used to construct the cusp connected sum that connects the two knots may be assumed to be disjoint from a smaller neighbourhood $V \subset U$ that contains $\phi^{[0,\epsilon]}_{R_\alpha}(\Lambda)$. In particular, the cusp connected sum $\Lambda'$ is disjoint from $\phi^{[0,\epsilon]}_{R_\alpha}(\Lambda)$. Since the negative Reeb flow induces a Legendrian isotopy from $\Lambda$ to $\phi^{-\epsilon}_{R_\alpha}(\Lambda)$ inside $Y \setminus \phi^\epsilon_{R_\alpha}(\Lambda)$ , for a suitable Legendrian arc $\eta$, we conclude from Lemma \ref{lma:stabilisedunknot} that 
\begin{lma}
\label{lma:Lambda'}
There exists a Legendrian isotopy that takes $\Lambda$ to $\Lambda'$ and which is supported inside $Y \setminus \phi^\epsilon_{R_\alpha}(\Lambda)$.
\end{lma}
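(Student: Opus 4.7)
The plan is to realize the desired Legendrian isotopy as the concatenation of two pieces, each of which can be arranged to avoid $\phi^\epsilon_{R_\alpha}(\Lambda)$.

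First, I would isotope $\Lambda$ to its negative Reeb push-off $\phi^{-\epsilon}_{R_\alpha}(\Lambda)$ simply by following the negative Reeb flow. Since $U \supset \phi^{[-\epsilon,\epsilon]}_{R_\alpha}(\Lambda)$ is a standard jet-neighbourhood of $\Lambda$ and $\epsilon>0$ is small, the one-parameter family $\phi^{-t}_{R_\alpha}(\Lambda)$ for $t \in [0,\epsilon]$ is a Legendrian isotopy that traces out $\phi^{[-\epsilon,0]}_{R_\alpha}(\Lambda) \subset U$. In the jet chart, this family sits on the opposite side of $\Lambda$ from $\phi^\epsilon_{R_\alpha}(\Lambda)$, so the support of this first piece is disjoint from $\phi^\epsilon_{R_\alpha}(\Lambda)$.

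Second, I would invoke Lemma \ref{lma:stabilisedunknot}(2) applied to $\phi^{-\epsilon}_{R_\alpha}(\Lambda)$, using the overtwisted disc bounded by $\Lambda_0$ which, by assumption, sits in $Y \setminus U$. The lemma provides a Legendrian isotopy from $\phi^{-\epsilon}_{R_\alpha}(\Lambda)$ to the cusp connected sum $\phi^{-\epsilon}_{R_\alpha}(\Lambda) \,\sharp_\eta\, (\Lambda_0)^{stab} = \Lambda'$, supported in an arbitrarily small neighbourhood of $\phi^{-\epsilon}_{R_\alpha}(\Lambda) \cup \Lambda_0 \cup \eta$. By the choices already set up in the paragraph preceding the lemma, $\Lambda_0$ lies outside $U$ and the connecting arc $\eta$ can be taken disjoint from the smaller neighbourhood $V \subset U$ that contains $\phi^{[0,\epsilon]}_{R_\alpha}(\Lambda)$. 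Shrinking the support of the isotopy enough, it stays within $Y \setminus V \supset Y \setminus \phi^\epsilon_{R_\alpha}(\Lambda)$.

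The two pieces concatenate to produce the required Legendrian isotopy from $\Lambda$ to $\Lambda'$ supported in $Y \setminus \phi^\epsilon_{R_\alpha}(\Lambda)$. The only slightly delicate point to verify is that the neighbourhood of $\phi^{-\epsilon}_{R_\alpha}(\Lambda) \cup \Lambda_0 \cup \eta$ involved in the second step can genuinely be shrunk so as to miss $\phi^\epsilon_{R_\alpha}(\Lambda)$; this is immediate from the separation $\phi^{-\epsilon}_{R_\alpha}(\Lambda) \subset U \setminus V$, $\Lambda_0 \subset Y\setminus U$, and $\eta \cap V = \emptyset$, so no serious obstruction arises.
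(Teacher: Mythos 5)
Your proof is correct and follows essentially the same route as the paper: the paper also obtains the isotopy by concatenating the negative Reeb flow isotopy from $\Lambda$ to $\phi^{-\epsilon}_{R_\alpha}(\Lambda)$ (which stays on the opposite side of $\Lambda$ from $\phi^\epsilon_{R_\alpha}(\Lambda)$) with the destabilisation isotopy of Lemma \ref{lma:stabilisedunknot} supported near $\phi^{-\epsilon}_{R_\alpha}(\Lambda) \cup \Lambda_0 \cup \eta$, which avoids the neighbourhood $V$ of $\phi^{[0,\epsilon]}_{R_\alpha}(\Lambda)$ by the choice of $\eta$. No gaps.
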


We then allude to Chantraine's construction of Lagrangian trace cobordisms from a Legendrian isotopies, in order to construct the following Lagrangian cylinders.

\begin{lma}
There exist constants $A_0<B_0<A_1<B_1$ and exact Lagrangian cobordisms
$$C_i \subset S\left(Y \setminus \phi^\epsilon_{R_\alpha}(\Lambda),\alpha\right), \:i=0,1,$$
diffeomorphic to cylinders, where
\begin{itemize}
\item $C_0 \cap \{\tau \le A_0\}=(-\infty,A_0] \times \Lambda$,
\item $C_0 \cap \{\tau \ge B_0\}=[B_0,+\infty) \times \Lambda'$, 
 \item $C_1 \cap \{\tau \le A_1\}=(-\infty,A_1] \times \Lambda'$, and
\item $C_1 \cap \{\tau \ge B_1\}=[B_1,+\infty) \times \Lambda$.
\end{itemize}
In particular, $C_i$ is cylindrical outside of the subset $\{\tau \in [A_i,B_i]\}$. Without loss of generality, we may assume that $A_0=0$, $B_0 \gg 0$, $A_1=B_0+1$, and $B_1 \gg A_1$.

Finally, after the latter translation, we may assume that the concatenation
$$C_0 \odot C_1 \coloneqq (C_0 \cap \{\tau \le B_0\}) \cup (C_1 \cup \{\tau \ge B_0\})$$
is compactly supported Hamiltonian isotopic to $\R \times \Lambda$ inside $S(Y\setminus \phi^\epsilon_{R_\alpha}(\Lambda),\alpha)$.
\end{lma}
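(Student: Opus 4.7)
The plan is to invoke Chantraine's Lagrangian trace construction \cite[Theorem 1.2]{Chantraine:Concordance} twice, once for the Legendrian isotopy supplied by Lemma \ref{lma:Lambda'} and once for its time-reverse, then realise the contractibility of the concatenated loop as a compactly supported Hamiltonian isotopy.

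First I would apply Lemma \ref{lma:Lambda'} to obtain a Legendrian isotopy $\{\Lambda_s\}_{s \in [0,1]}$ in $Y \setminus \phi^\epsilon_{R_\alpha}(\Lambda)$ with $\Lambda_0=\Lambda$ and $\Lambda_1=\Lambda'$. Since the ambient contact manifold $Y \setminus \phi^\epsilon_{R_\alpha}(\Lambda)$ is open, Chantraine's construction applied to this isotopy produces an exact Lagrangian embedding of a cylinder
$$C_0 \subset S\left(Y \setminus \phi^\epsilon_{R_\alpha}(\Lambda),\alpha\right)$$
that is cylindrical over $\Lambda$ on its negative end and over $\Lambda'$ on its positive end, with the transition taking place in a compact $\tau$-interval. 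Since translation in the $\tau$-coordinate is an exact symplectomorphism of the symplectisation, we may assume that the transition happens in $[A_0,B_0]=[0,B_0]$ for some $B_0 \gg 0$. Running the same construction on the reversed isotopy $\{\Lambda_{1-s}\}$ and translating produces $C_1$ with transition region $[A_1,B_1]$; we choose $A_1=B_0+1$, and $B_1-A_1$ as large as needed.

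For the final claim, I would observe that the concatenation $C_0 \odot C_1$ is the Lagrangian trace of the Legendrian loop obtained by running $\{\Lambda_s\}$ forwards and then backwards, based at $\Lambda$. This loop is canonically null-homotopic rel endpoints through Legendrian isotopies based at $\Lambda$, namely via the homotopy $\{\Lambda_s\}_{s \in [0,r]}$ concatenated with $\{\Lambda_{r-s}\}_{s \in [0,r]}$ for $r \in [0,1]$. Invoking Chantraine's construction in a one-parameter family produces a smooth family of exact Lagrangian cylinders interpolating between $C_0\odot C_1$ and the trivial cylinder $\R \times \Lambda$, all sharing the same cylindrical ends outside a fixed compact $\tau$-interval. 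A standard application of the Lagrangian neighbourhood theorem, together with the exactness of each Lagrangian in the family, then converts this family into a compactly supported Hamiltonian isotopy in $S(Y \setminus \phi^\epsilon_{R_\alpha}(\Lambda),\alpha)$.

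The main obstacle I anticipate is the compact support of the final Hamiltonian isotopy rather than merely the existence of a Hamiltonian isotopy: one must verify that the interpolating family may be arranged to be stationary outside a compact set, which hinges on running Chantraine's construction with a homotopy of Legendrian isotopies whose variation is supported in a fixed compact subset, together with uniform control on the primitives of the symplectic action forms so that the Hamiltonian function itself may be cut off without leaving the Lagrangian class.
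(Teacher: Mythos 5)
Your proposal follows the paper's proof essentially verbatim: both construct $C_0$ and $C_1$ as Chantraine trace cobordisms of the isotopy from Lemma \ref{lma:Lambda'} and its time-reverse, then translate in the $\tau$-direction. The only difference is that for the final claim the paper simply cites \cite[Proposition B.1]{DRS3}, whereas you sketch the underlying argument (contractibility of the forward-then-backward loop plus conversion of an exact Lagrangian family with fixed ends into a compactly supported Hamiltonian isotopy), which is indeed the content of that reference and correctly identifies the compact-support issue as the delicate point.
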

\begin{proof}
In \cite[Theorem 1.2]{Chantraine:Concordance} a Lagrangian trace cobordism in the symplectisation was constructed from a Legendrian isotopy. We take $C_0$ to be the Lagrangian trace cobordism produced by the isotopy from $\Lambda$ to $\Lambda'$ in $Y\setminus \phi^\epsilon_{R_\alpha}(\Lambda)$ provided by Lemma \ref{lma:Lambda'}. Then, we let $C_1$ be the Lagrangian trace cobordism of the Legendrian isotopy obtained by reversing the time in the same isotopy.

The prescribed cylindrical behaviour of $C_i$ can be achieved after appropriate translations of $C_i$ in the $\R_\tau$-factor.

The fact that the concatenations of the trace cobordisms are compactly supported Hamiltonian isotopic to the trivial cylinder was shown in e.g.~\cite[Proposition B.1]{DRS3}.
\end{proof}

Setting $\lambda \ge B_1$ in the construction of $L_\lambda$, we can replace the trivial cylinder
$$[0,B_1] \times \Lambda\subset L_\lambda$$
by the concatenation $C_0 \odot C_1$ of $C_0$ with $C_1$ produced by the above lemma. This means that we replace $[A_i,B_i] \times \Lambda$ by $C_i$ inside the subset $\tau \in [A_i,B_i]$ and, in particular, $[B_0,A_1] \times \Lambda$ is replaced by $[B_0,A_1]\times\Lambda'$ (recall that $A_1=B_0+1)$. We denote this deformed Lagrangian torus by $L_\lambda'$; note that this torus is still is embedded, but not necessarily exact. In addition, by the above lemma, it is Lagrangian isotopic to the original torus $L_\lambda$.

Exactness will be achieved by, roughly speaking, deforming the cylindrical part of $L_\lambda'$ given by
$$[B_0, A_1 ] \times \phi^\epsilon_{R_\alpha}(\Lambda) \subset L_\lambda'$$
by using the Reeb flow of a different contact form $\beta$. (See Proposition \ref{prp:deformation} below.) First we need to construct this new contact form.

\begin{lma}
The contact form $\alpha$ can be deformed outside of a neighbourhood of $\phi^{\epsilon}_{R_\alpha}(\Lambda)$ to a new contact form $\beta$ for which $\phi^\R_{R_{\beta}}(\phi^{\epsilon}_{R_\alpha}(\Lambda))$ is disjoint from a neighbourhood of $\Lambda'$. 
\end{lma}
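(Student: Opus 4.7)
\emph{Proof plan.} The plan is to construct $\beta$ via Lemma \ref{lma:reeb} from a strictly positive autonomous contact Hamiltonian $H \colon Y \to (0,\infty)$ whose contact flow on $(Y,\alpha)$ induces a positive loop of $\phi^\epsilon_{R_\alpha}(\Lambda)$ whose trace lies in the complement of a prescribed neighbourhood of $\Lambda'$. Since Lemma \ref{lma:reeb} identifies the contact flow of $H$ for $\alpha$ with the Reeb flow of $\beta$, the orbits $\phi^\R_{R_\beta}(\phi^\epsilon_{R_\alpha}(\Lambda))$ then trace out exactly this compact loop and stay disjoint from the neighbourhood of $\Lambda'$.

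To produce $H$, first observe that since $Y$ is overtwisted and $\Lambda' \cup \phi^\epsilon_{R_\alpha}(\Lambda)$ is a one-dimensional Legendrian link, we can pick an overtwisted disc disjoint from both Legendrians, for example by perturbing any given overtwisted disc off the link by a small contact isotopy. By Lemma \ref{lma:stabilisedunknot}(2) this shows $\phi^\epsilon_{R_\alpha}(\Lambda)$ to be Legendrian isotopic, inside $Y \setminus \Lambda'$, to a stabilised representative $K^{\mathrm{stab}}$. The Legendrian isotopy extension theorem realises the isotopy by an ambient contact isotopy of $Y$ supported in $Y \setminus \Lambda'$, so after pulling back $\alpha$ along it we may assume $\phi^\epsilon_{R_\alpha}(\Lambda) = K^{\mathrm{stab}}$. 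By the Weinstein neighbourhood theorem, $K^{\mathrm{stab}}$ admits a tubular jet-neighbourhood contactomorphic to $(J^1S^1, \alpha_0)$, and by shrinking it we may assume it is disjoint from a chosen neighbourhood of $\Lambda'$. Inside this jet-neighbourhood, Lemma \ref{lma:posloop} places a representative of $K^{\mathrm{stab}}$ in the region $\{p \in (0,\epsilon'),\ z \in (-\epsilon',\epsilon')\}$ for arbitrarily small $\epsilon' > 0$, and provides the autonomous positive contact Hamiltonian $H_0 = p$ whose flow $(\theta,p,z) \mapsto (\theta - t, p, z)$ forms a loop of $K^{\mathrm{stab}}$ of period $2\pi$, whose trace is a compact $2$-torus contained in this jet-neighbourhood.

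It remains to extend $H_0$ to a strictly positive autonomous function $H$ on all of $Y$ without disturbing the loop. Choose a bump function $\chi$ supported in $\{p \in (\delta, \epsilon')\}$, where $0 < \delta$ is smaller than the minimum of the $p$-coordinate on the loop trace, with $\chi \equiv 1$ on a neighbourhood of the trace, and set
\[ H \coloneqq \chi H_0 + (1-\chi)c \]
for some positive constant $c > 0$. Then $H > 0$ on $Y$ (since $H_0 = p > \delta > 0$ on $\operatorname{supp}\chi$), is autonomous, and agrees with $H_0$ on the loop trace, so the contact flow of $H$ restricted to $K^{\mathrm{stab}}$ is exactly the positive loop of Lemma \ref{lma:posloop}. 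Applying Lemma \ref{lma:reeb} to $H$ produces the contact form $\beta$ whose Reeb orbits through $K^{\mathrm{stab}}$ coincide with this loop, and hence are contained in the compact trace inside the jet-neighbourhood, disjoint from the prescribed neighbourhood of $\Lambda'$. The main technical obstacle is guaranteeing that the positive loop trace stays inside the fixed complement of $\Lambda'$, which is achieved by the freedom to shrink $\epsilon'$ (and the jet-neighbourhood) arbitrarily in Lemma \ref{lma:posloop}.
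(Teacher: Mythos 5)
Your strategy is genuinely different from the paper's, and it has a gap that is not merely cosmetic: it contradicts the support condition in the statement. The lemma asserts that $\alpha$ is deformed \emph{outside} a neighbourhood of $\phi^{\epsilon}_{R_\alpha}(\Lambda)$, i.e.\ that $\beta$ agrees with $\alpha$ near that component. Your construction does the opposite: to turn the $\beta$-Reeb orbit of $\phi^{\epsilon}_{R_\alpha}(\Lambda)$ into a closed loop you take $H=H_0=p$ (non-constant) on a neighbourhood of the loop trace, which is a neighbourhood of $\phi^{\epsilon}_{R_\alpha}(\Lambda)$ itself, so $\beta=e^{-H}\alpha$ is deformed exactly where the statement forbids it. (The ``pull back $\alpha$ along the ambient isotopy'' step compounds this, since the conformal factor of that contactomorphism is also nontrivial near $\phi^{\epsilon}_{R_\alpha}(\Lambda)$, and it changes $R_\alpha$ and hence the meaning of $\phi^{\epsilon}_{R_\alpha}(\Lambda)$.) This matters downstream: in Step 2 of Proposition \ref{prp:deformation} the cylindrical piece is re-wiggled using the $\beta$-Reeb flow, and one wants the Reeb dynamics near $\phi^{\epsilon}_{R_\alpha}(\Lambda)$ unchanged; your $\beta$ instead creates a $2\pi$-periodic orbit structure along that component, hence new Reeb self-chords.

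The paper's argument puts the positive loop on the \emph{other} component. By construction $\Lambda'=(\Lambda_0)^{stab}\,\sharp_\eta\,\phi^{-\epsilon}_{R_\alpha}(\Lambda)$ is stabilised and sits inside a jet-neighbourhood $J^1(\Lambda_0\,\sharp_\eta\,\Lambda)$ disjoint from $\phi^{\epsilon}_{R_\alpha}(\Lambda)$; by Lemma \ref{lma:posloop} it can be placed in the rotation-invariant region $\{p\in(\delta_0,\delta),\ z\in[-\delta,\delta]\}$, and the positive autonomous Hamiltonian generating the rotation there (extended by a constant elsewhere) yields, via Lemma \ref{lma:reeb}, a form $\beta$ whose Reeb flow \emph{preserves} that region. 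Since $\phi^{\epsilon}_{R_\alpha}(\Lambda)$ starts outside an invariant set containing $\Lambda'$, its entire orbit stays outside --- and $\beta$ is only modified inside the jet-neighbourhood, away from $\phi^{\epsilon}_{R_\alpha}(\Lambda)$, as required. Note also that your route needs $\phi^{\epsilon}_{R_\alpha}(\Lambda)$ to be stabilised in the complement of $\Lambda'$, for which ``perturb an overtwisted disc off the link'' is not general position (a $2$-disc and a $1$-dimensional link in a $3$-manifold generically intersect); the paper avoids this entirely because $\Lambda'$ is the component that is stabilised in the complement of the other by its very construction. If you want to salvage your approach, you would at minimum have to weaken the statement by dropping the support condition and then re-examine the embeddedness argument in Proposition \ref{prp:deformation}; it is cleaner to trap $\Lambda'$ in an invariant neighbourhood rather than to trap the orbit of $\phi^{\epsilon}_{R_\alpha}(\Lambda)$.
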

\begin{proof}
The Legendrian $\Lambda'$ is a stabilisation of the cusp connected sum
$$\Lambda_0 \,\sharp_\eta\, \Lambda \subset Y \setminus \phi^{\epsilon}_{R_\alpha}(\Lambda),$$ and may thus be assumed to be contained in a small standard jet-neighbourhood $J^1(\Lambda_0 \,\sharp_\eta\,\Lambda)$ of $\Lambda_0 \,\sharp_\eta\, \Lambda$ that is contained inside $Y \setminus \phi^{\epsilon}_{R_\alpha}(\Lambda)$. Further, $\Lambda'$ may be identified with a stabilisation of the zero-section $j^10 \subset J^1(\Lambda_0 \,\sharp_\eta\,\Lambda)$ inside the same neighbourhood. By Lemma \ref{lma:posloop} we can assume that $\Lambda'$ is contained inside a neighbourhood of the form
$$\left\{ p \in (\delta_0,\delta), z \in [-\delta,\delta] \right\} \subset J^1(\Lambda_0 \,\sharp\, \Lambda)$$
for some $\delta>0$ sufficiently small and $0<\delta_0<\delta$. The fact that the latter neighbourhood still is disjoint from $\phi^{\epsilon}_{R_\alpha}(\Lambda)$ is important.

Recall that the above neighbourhood of $\Lambda'$ admits a periodic contact isotopy that is generated by a strictly positive autonomous contact Hamiltonian (see Lemma \ref{lma:posloop}). By Lemma \ref{lma:reeb} this contact isotopy is the Reeb flow for a contact form $\beta$. In particular, the image $\phi^\R_{R_{\beta}}(\phi^{\epsilon}_{R_\alpha}(\Lambda))$ is disjoint from the above neighbourhood of $\Lambda'$ for this choice of contact form.
\end{proof}

The deformation of $L_\lambda'$ to an exact Lagrangian embedding is finally produced by the below general proposition. 

\begin{prp}
\label{prp:deformation}
Let $L \subset S(Y,\alpha)$ be a closed Lagrangian submanifold that satisfies the following:
\begin{itemize}
\item
The intersection 
$$ L \cap \left\{\tau \in [\tau_0-\epsilon,\tau_0+\epsilon]\right\}=[\tau_0-\epsilon,\tau_0+\epsilon] \times (\Lambda^0 \sqcup \Lambda^1) $$
is cylindrical, where $\Lambda^i \subset Y$, $i=0,1,$ are two connected Legendrian submanifolds; and
\item The symplectic action vanishes on any element in $H_1\left(L \setminus \{\tau=\tau_0\}\right)$.
\end{itemize}
If there are no Reeb chords between $\Lambda^0$ and $\Lambda^1$ for a second contact form $\beta=e^f\alpha$, $f \colon Y \to \R$, then $L$ is Lagrangian isotopic to an exact Lagrangian embedding by an isotopy supported inside the subset $\{ \tau \ge \tau_0-\epsilon\}$.
\end{prp}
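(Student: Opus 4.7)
The strategy is to identify the obstruction to exactness of $L$ as a single real number $c \in \R$, and then to cancel it by inserting into $L$ a Lagrangian cylinder built from the $\beta$-Reeb flow via Lemma \ref{lma:immersion}.

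First, I would quantify the obstruction. On the cylindrical region $[\tau_0-\epsilon,\tau_0+\epsilon]\times(\Lambda^0\sqcup\Lambda^1)$ the restriction $e^\tau\alpha|_L$ vanishes identically, since $\alpha$ annihilates both the Legendrian tangent directions of each $\Lambda^i$ and the $\partial_\tau$ direction. Combined with the hypothesis on $H_1(L\setminus\{\tau=\tau_0\})$, this gives primitives $h^\pm$ of $e^\tau\alpha|_L$ on the two components of $L\setminus\{\tau=\tau_0\}$, each of which is constant along every Legendrian slice $\{\tau\}\times\Lambda^i$. The jumps $c_i \coloneqq h^+|_{\Lambda^i}-h^-|_{\Lambda^i}$ are real constants, and up to adjusting the primitives by an additive constant, $L$ is exact precisely when $c \coloneqq c_1-c_0$ vanishes.

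Second, I would build a Lagrangian cylinder with tunable action inside $S(Y\setminus\Lambda^0,\beta)$. The no-Reeb-chord hypothesis implies that $\bigcup_{t\in\R}\phi^t_{R_\beta}(\Lambda^1)$ is disjoint from $\Lambda^0$, so Lemma \ref{lma:immersion} applied to $\Lambda^1$ inside the contact manifold $Y\setminus\Lambda^0$ equipped with $\beta$ yields, for any embedded curve $\gamma=(\gamma_1,\gamma_2)\colon[0,1]\to\R^2$ satisfying $\gamma_2(0)=\gamma_2(1)=0$ and $\gamma_1(0)\neq\gamma_1(1)$, an embedded Lagrangian cylinder $C_\gamma$ interpolating between the two horizontal $\Lambda^1$-slices at heights $\gamma_1(0)$ and $\gamma_1(1)$ whose end-to-end primitive difference is $\int_\gamma e^{x_1}\,dx_2$. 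Embedded-ness is ensured by confining $\gamma$ to a thin strip that avoids the self-Reeb chord lengths of $\Lambda^1$ for $\beta$, and by Stokes' theorem the integral can be prescribed to be any element of $\R$, in particular $-c$.

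Third, I would transport $C_\gamma$ into $S(Y\setminus\Lambda^0,\alpha)$ via the exact symplectomorphism $\Phi\colon S(Y,\alpha)\to S(Y,\beta)$, which preserves both Lagrangian-ness and the primitive. The two boundary slices of $\Phi^{-1}(C_\gamma)$ appear as graphs of the form $\{(c'+f(x),x):x\in\Lambda^1\}$ rather than horizontal slices; however, every such graph over a Legendrian is tautologically Lagrangian with vanishing restricted action form, and the straight-line interpolation between two such graphs is a Hamiltonian isotopy connecting them to horizontal slices. After this flattening, splice the resulting cylinder into the $\Lambda^1$-component of the cylindrical region of $L$, absorbing the small height shift by reparametrising the $\tau$-coordinate within the cylindrical region. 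Letting $\gamma$ vary continuously from a constant curve to one with $\int_\gamma e^{x_1}\,dx_2=-c$ then produces a Lagrangian isotopy from $L$ to an exact Lagrangian, supported in $\{\tau\geq\tau_0-\epsilon\}$ by construction. The main obstacle is ensuring that the deformation stays through embeddings; the no-Reeb-chord hypothesis is precisely what guarantees that the $\Lambda^1$-side, as it is deformed via the $\beta$-Reeb flow, remains disjoint from the unchanged $\Lambda^0$-side of $L$, while self-embedded-ness of $C_\gamma$ and smooth compatibility across the splice locus reduce to open conditions handled by the size of the strip containing $\gamma$ and by the flattening step.
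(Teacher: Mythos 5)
Your core strategy coincides with the paper's: reduce the failure of exactness to a single action defect $c\in\R$ and cancel it by splicing into the cylindrical collar a cylinder produced by Lemma \ref{lma:immersion} for the $\beta$-Reeb flow, with the no-chord hypothesis guaranteeing embeddedness. The gap is in the step where you fit this correcting cylinder back into $L$. The cylinder $C_\gamma$ is cylindrical for $\beta$, so its ends are horizontal slices of $S(Y,\beta)$; pulled back by $\Phi^{-1}$ to $S(Y,\alpha)$ its ends become graphs $\{\tau=c'+f(y)\}$ over $\Lambda^1$, and the whole object, including your flattening collars, occupies a $\tau$-interval of length at least the oscillation of $f$ over the region swept out by the $\beta$-Reeb flow of $\Lambda^1$. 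This is not a ``small height shift'': nothing bounds the oscillation of $f$ by $2\epsilon$ (in the application $f$ comes from extending a contact Hamiltonian over all of $Y$). If the spliced piece leaves the slab $\{\tau\in[\tau_0-\epsilon,\tau_0+\epsilon]\}$ it can collide with the non-cylindrical parts of $L$, which sit over arbitrary subsets of $Y$ and are not controlled by the no-chord hypothesis; and ``reparametrising the $\tau$-coordinate'' is not a symplectomorphism, so it cannot absorb the discrepancy.

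A second, related problem: you ensure embeddedness of $C_\gamma$ by confining $\gamma$ to a strip that is thin in the Reeb-time direction $x_2$, so as to avoid the self-chord lengths of $\Lambda^1$. But then $\bigl|\int_\gamma e^{x_1}\,dx_2\bigr|$ is bounded by (thickness of the strip)$\times(e^B-e^A)$, where $[A,B]$ is the $x_1$-range, which you must also keep small to fit into the collar; so you cannot in general prescribe the value $-c$. The paper resolves both issues at once. First it \emph{stretches the neck}: translate $L\cap\{\tau\ge\tau_0+\epsilon\}$ far upwards and insert a long trivial cylinder, so that the image $\Phi(L_t)$ contains an honest $\beta$-cylindrical piece $[A,B]\times(\Lambda^0\sqcup\Lambda^1)$ of any desired length. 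Second, it takes $\gamma$ \emph{graphical over the $\tau$-coordinate}: by the double-point criterion of Lemma \ref{lma:immersion}, double points require $\gamma_1(t_1)=\gamma_1(t_2)$, so graphicality makes self-chords of the deformed component irrelevant and allows $\gamma_2$ to range over an arbitrarily large interval, realizing any action value while keeping the new cylinder inside $\{\tau\in(A,B)\}$, where the rest of $\Phi(L_t)$ is cylindrical and hence avoided by the no-chord hypothesis. With these two modifications your argument goes through, and the neck-stretching is also what keeps the entire isotopy supported in $\{\tau\ge\tau_0-\epsilon\}$.
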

\begin{proof}
	We must first deform the Lagrangian $L$ in order to make it cylindrical in some subset of the symplectisation $S(Y,\beta)$ for the new contact form $\beta$.
	
\emph{Step 1 (Stretch the neck):}	
We stretch the neck of $L$ to $L_t$ by translating $L \cap \{\tau_0+\epsilon\}$ by $t \ge 0$ in the positive $\tau$-direction, and then extending the cylindrical part to
$$ [\tau_0-\epsilon,\tau_0+\epsilon+t] \times (\Lambda^0 \sqcup \Lambda^1).$$
This Lagrangian isotopy clearly produces a Lagrangian $L_t$ that still satisfies the same assumptions as the original Lagrangian $L$.

For $t \gg 0$, the image of the cylindrical subset
$$L_t \cap \left\{\tau \in [\tau_0-\epsilon,\tau_0+\epsilon+t]\right\} =[\tau_0-\epsilon,\tau_0+\epsilon+t] \times (\Lambda^0 \sqcup \Lambda^1)$$
under the exact symplectomorphism
\begin{gather*}
	\Phi \colon S(Y,\alpha) \to S(Y,\beta),\\
	(\tau,y) \mapsto (\tau-f,y),
\end{gather*}
 still contains a cylindrical subset. More precisely, we can assume that 
$$ \Phi(L_t) \cap \left\{\tau \in [A,B]\right\}=[A,B] \times (\Lambda^0 \sqcup \Lambda^1)$$
holds whenever $A<B$ satisfies $A\ge \tau_0-\epsilon+\min f$ and $B \le \tau_0+\epsilon+t -\max f$, where we have taken $t \gg 0$. 
 
\emph{Step 2 (Deform by using the Reeb flow of $\beta$):}
	
The non-existence of Reeb chords between $\Lambda^0$ and $\Lambda^1$ for the contact form $\beta$ gives us the following freedom for deforming $\Phi(L_t)$ without introducing double points. We can replace the cylinder
$$ [A,B] \times \Lambda^0 $$
with a cylinder that is constructed by an application of Lemma \ref{lma:immersion}, for \emph{any} properly embedded connected interval $\gamma \subset (A,B) \times \R$ that coincides with $[A,B] \times \{0\}$ outside of a compact subset, and which is graphical over the first coordinate; the assumption on the non-existence of Reeb chords implies that the new Lagrangian remains \emph{embedded} (and, for the same reason, it is Lagrangian isotopic to $\Phi(L_t)$).

It readily follows by the symplectic action computation in Lemma \ref{lma:immersion} that a suitable such curve $\gamma$ will make the resulting deformation of $L$ into to a Lagrangian embedding that is exact. One can simply take a curve that gives rise to a Lagrangian cylinder whose potential difference
$$ \int_\gamma e^{x_1}dx_2 \in \R$$
of its two boundary components cancels the potential difference of the value of $g$ on the two boundary components of
$$ \tilde{L} \coloneqq \Phi(L_t) \setminus ((A,B) \times \Lambda^0) $$
where $dg=\left.e^\tau\beta\right|_{T\tilde{L}}$. Here, note that each half
$$L_t^\pm \coloneqq L_t \cap \{ \pm \tau \le \tau_0\}$$
is an exact Lagrangian cylinder, in the sense that there is a globally defined primitive of $e^\tau \beta|_{TL_t^\pm}$ which is locally constant (but not globally constant) on the boundary $\partial L_t^\pm$.
\end{proof}

This also finishes the proof of Theorem \ref{thm:main}. \qed

\subsection{Exact embedding of a Klein bottle}
\label{sec:klein}

We start with the non-exact Lagrangian embedding
$$L_\lambda=L_{\Lambda_0,\gamma_\lambda} \subset S(Y,\alpha)$$
of a torus constructed using Lemma \ref{lma:immersion} in Subsection \ref{sec:torus} in the particular case when the Legendrian $\Lambda=\Lambda_0$ is the boundary of an overtwisted disc inside the overtwisted contact three-manifold $(Y,\alpha)$.

The next step is to modify $L_\lambda$ inside the subset $\{\tau \le 0\} \subset S(Y,\alpha)$ in order to to produce a Klein bottle. Recall that
$$C_0 \coloneqq L_\lambda \cap \{\tau \le 0\}$$
is a Lagrangian cylinder with Legendrian boundary equal to
$$\Lambda_0 \cup \phi^\epsilon(\Lambda_0) \subset \{0\} \times Y.$$
We will replace this cylinder with a Lagrangian cylinder $C \subset \{\tau \le 0\}$ that coincides with $C_0$ near the boundary $\{0\} \times Y$, and which reverses the orientation of precisely one of the boundary components when compared to $C_0$. If we replace the cylinder $C_0 \subset L_\lambda$ by $C$, then the new Lagrangian $L_\lambda'$ that we obtain is an embedded Klein bottle that coincides with $L_\lambda$ in the subset $\{\tau \ge 0\}$. Once this has been established, the deformation of this Klein bottle needed in order to turn it into an exact Lagrangian can be carried out by the very same argument as in the case of a torus, which we treated in Subsection \ref{sec:torus}.

What remains is to describe the construction of the cylinder $C$. Start with the cylinder $C_0$ with boundary is given by $\Lambda_0 \cup \phi_{R_\alpha}^\epsilon(\Lambda_0)$. The sought deformation $C$ of $C_0$ can be taken to be the concatenation (and translation) of $C_0$ and the Lagrangian trace cobordism of a Legendrian isotopy that
\begin{itemize}
\item fixes one Legendrian boundary component of $C_0$ pointwise, while it
\item reverses the orientation of the second Legendrian boundary component;
\end{itemize} Again we refer to see \cite{Chantraine:Concordance} for the construction of the Lagrangian trace cobordism. The sought orientation reversing Legendrian isotopy exists by Lemma \ref{lma:reversing}, since $\phi_{R_\alpha}^\epsilon(\Lambda_0)$ is the boundary of an overtwisted disc that is contained in the complement of $\Lambda_0$.

\subsection{Exact embedding of a torus with vanishing Maslov class}

Assume that the first Chern class of $(Y,\ker \alpha)$ vanishes. If we take $\Lambda$ to be null homologous and satisfy $\OP{rot}(\Lambda)=0$, then the torus $L_\lambda=L_{\Lambda,\gamma_\lambda} \subset S(Y,\alpha)$ above admits a curve of Maslov index equal to two. More precisely, the Maslov class vanishes on any curve $\{\gamma_1(\theta_0)\} \times \Lambda \subset L_\lambda$, while it takes the value $\pm2$ on any curve of the form
$$\theta \mapsto \left(\gamma_1(\theta),\phi^{\gamma_2(\theta)}_{R_\alpha}(p)\right), \:\: \text{ for some } p \in \Lambda.$$
Hence the construction of the exact Lagrangian torus needs to be deformed in order for it to yield an exact Lagrangian torus of vanishing Maslov class.

We perform a construction similar to the one in Subsection \ref{sec:klein}, i.e. we deform the Lagrangian cylinder
$$C_0 \coloneqq L_\lambda \cap \{\tau \le 0\}$$
by inserting a non-trivial Lagrangian cylinder that is induced by the trace of a non-trivial Legendrian isotopy of $\Lambda$. Then we carry out the deformation by a Lagrangian isotopy as in Subsection \ref{sec:torus} needed in order to achieve exactness.

Instead of the trace of an orientation-reversing isotopy of $\Lambda$ which was used for the construction of the Klein bottle, we need a Legendrian isotopy that starts and ends at $\Lambda$, and which induces a difference of $-2$ of the value of its Maslov potentials when comparing the negative and positive ends of the Lagrangian trace cobordism produced by \cite{Chantraine:Concordance}. See e.g.~\cite[Section 2.5]{Dimitroglou:Families} for the definition of the Maslov potential in this setting. Such a loop of Legendrians can indeed be constructed inside $Y \setminus \phi^\epsilon_{R_\alpha}(\Lambda)$, by relying on the overtwistedness of this contact manifold. Namely, since $\Lambda \subset Y \setminus \phi^\epsilon_{R_\alpha}(\Lambda)$ is stabilised, the loop produced by Lemma \ref{lma:posloop} can readily be seen to have the sought properties concerning the induced shift in the Maslov potential. To see this, we note that the once stabilised zero section $(j^10)^{stab}$ has rotation number ${\tt rot}=\pm 1$ compared to the canonical trivialisation of $\ker \alpha_0 \subset T(J^1S^1)$.
\qed

\section{Proof of Theorem \ref{thm:noninterlinkedness}}
Write $Y=J^1S^1$. Further, let $\theta',p',z'$ denote the coordinates on the standard contact jet-neighbourhood $J^1\Lambda'$ of $\Lambda' \coloneqq j^1\cos{\theta} \subset Y$ in which the Legendrian $\Lambda'$ corresponds to the zero section $j^10=\{z'=p'=0\} \subset J^1\Lambda'$; see \cite[Theorem 6.2.2]{Geiges}. The contact isotopy $\phi^t$ defined by $\theta \mapsto \theta-t$ fixes the neighbourhood
$$U \coloneqq \{p' \in [\delta_0,\delta], z' \in [-\delta,\delta]\}$$
for $0<\delta_0<\delta$. One immediately verifies that its corresponding contact Hamiltonian is positive and autonomous there. We can extend this generating Hamiltonian to all of $Y$ and moreover assume that it is autonomous and satisfies
$$H\colon Y \to [c,+\infty)$$
for some $c>0$. Again, denote by $\phi^t \colon Y \to Y$ the induced contact isotopy, which may be assumed to be well-defined for all $t \in \R$.

The Legendrian $\Lambda$ given as the stabilisation of $\Lambda'$ lives in the standard contact jet-neighbourhood of $\Lambda'$ by construction. After a Legendrian isotopy supported inside the same neighbourhood, it can be placed entirely in the subset $U \subset J^1\Lambda'$ described above; see Lemma \ref{lma:posloop}. Since $\phi^t$ preserves the neighbourhood $U$, and since the original Legendrian $j^10 \subset Y=J^1S^1$ is disjoint from $U$, it thus follows that $\phi^t(j^10) \cap \Lambda = \emptyset$ for all $t \in \R$, as required.
\qed

%\printbibliography
\bibliographystyle{alpha}
\bibliography{refs} 

\def\cprime{$'$}
\begin{thebibliography}{CCDR19}

\bibitem[Avd23]{Avdek}
R.~Avdek.
\newblock Combinatorial {R}eeb dynamics on punctured contact 3--manifolds.
\newblock {\em Geom. Topol.}, 27(3):953--1082, 2023.

\bibitem[Can23]{Cant}
D.~Cant.
\newblock Remarks on the oscillation energy of {L}egendrian isotopies.
\newblock Preprint (2023),
  \href{http://arxiv.org/abs/2301.06205}{arXiv:2301.06205 [math.SG]}, 2023.

\bibitem[CCDR19]{Dimitroglou:Positive}
B.~Chantraine, V.~Colin, and G.~Dimitroglou~Rizell.
\newblock Positive {L}egendrian isotopies and {F}loer theory.
\newblock {\em Ann. Inst. Fourier (Grenoble)}, 69(4):1679--1737, 2019.

\bibitem[CFP17]{Colin:Positive}
V.~Colin, E.~Ferrand, and P.~Pushkar.
\newblock Positive isotopies of {L}egendrian submanifolds and applications.
\newblock {\em Int. Math. Res. Not. IMRN}, (20):6231--6254, 2017.

\bibitem[Cha10]{Chantraine:Concordance}
B.~Chantraine.
\newblock Lagrangian concordance of {L}egendrian knots.
\newblock {\em Algebr. Geom. Topol.}, 10(1):63--85, 2010.

\bibitem[CM18]{Cieliebak:Punctured}
K.~Cieliebak and K.~Mohnke.
\newblock Punctured holomorphic curves and {L}agrangian embeddings.
\newblock {\em Invent. Math.}, 212(1):213--295, 2018.

\bibitem[CMP19]{Casals:Geometric}
R.~Casals, E~Murphy, and F.~Presas.
\newblock Geometric criteria for overtwistedness.
\newblock {\em J. Amer. Math. Soc.}, 32(2):563--604, 2019.

\bibitem[DR16]{Dimitroglou:Ambient}
G.~Dimitroglou~Rizell.
\newblock Legendrian ambient surgery and {L}egendrian contact homology.
\newblock {\em J. Symplectic Geom.}, 14(3):811--901, 2016.

\bibitem[DR22]{Dimitroglou:Families}
G.~Dimitroglou~Rizell.
\newblock Families of {L}egendrians and {L}agrangians with unbounded spectral
  norm.
\newblock {\em J. Fixed Point Theory Appl.}, 24(2):Paper No. 43, 32, 2022.

\bibitem[DRGI16]{Dimitroglou:Isotopy}
G.~Dimitroglou~Rizell, E.~Goodman, and A.~Ivrii.
\newblock Lagrangian isotopy of tori in {$S^2\times S^2$} and {$\Bbb{C}P^2$}.
\newblock {\em Geom. Funct. Anal.}, 26(5):1297--1358, 2016.

\bibitem[DRS21]{DRS3}
G.~Dimitroglou~Rizell and M.~G. Sullivan.
\newblock The persistence of a relative {R}abinowitz-{F}loer complex.
\newblock Preprint (2021),
  \href{http://arxiv.org/abs/2111.11975}{arXiv:2111.11975 [math.SG]}, 2021.

\bibitem[Dym01]{Dymara}
K.~Dymara.
\newblock Legendrian knots in overtwisted contact structures on {$S^3$}.
\newblock {\em Ann. Global Anal. Geom.}, 19(3):293--305, 2001.

\bibitem[EF98]{EliashbergFraser}
Y.~Eliashberg and M.~Fraser.
\newblock Classification of topologically trivial {L}egendrian knots.
\newblock In {\em Geometry, topology, and dynamics ({M}ontreal, {PQ}, 1995)},
  volume~15 of {\em CRM Proc. Lecture Notes}, pages 17--51. Amer. Math. Soc.,
  Providence, RI, 1998.

\bibitem[EH03]{OnConnectedSum}
J.~B. Etnyre and K.~Honda.
\newblock On connected sums and {L}egendrian knots.
\newblock {\em Adv. Math.}, 179(1):59--74, 2003.

\bibitem[Eli89]{Eliashberg:Overtwisted}
Y.~Eliashberg.
\newblock Classification of overtwisted contact structures on {$3$}-manifolds.
\newblock {\em Invent. Math.}, 98(3):623--637, 1989.

\bibitem[Eli90]{Eliashberg:Filling}
Y.~Eliashberg.
\newblock Filling by holomorphic discs and its applications.
\newblock In {\em Geometry of low-dimensional manifolds, 2 ({D}urham, 1989)},
  volume 151 of {\em London Math. Soc. Lecture Note Ser.}, pages 45--67.
  Cambridge Univ. Press, Cambridge, 1990.

\bibitem[EM13]{EliashbergMurphy}
Y.~Eliashberg and E.~Murphy.
\newblock Lagrangian caps.
\newblock {\em Geom. Funct. Anal.}, 23(5):1483--1514, 2013.

\bibitem[Ent21]{EntovPolterovich}
L.~Entov, M.;~Polterovich.
\newblock Legendrian persistence modules and dynamics.
\newblock Preprint (2021), \url{https://arxiv.org/abs/2101.03760} [math.SG],
  2021.

\bibitem[Flo88]{Floer}
A.~Floer.
\newblock Morse theory for {L}agrangian intersections.
\newblock {\em J. Differential Geom.}, 28(3):513--547, 1988.

\bibitem[Gei08]{Geiges}
H.~Geiges.
\newblock {\em An introduction to contact topology}, volume 109 of {\em
  Cambridge Studies in Advanced Mathematics}.
\newblock Cambridge University Press, Cambridge, 2008.

\bibitem[Gir91]{Giroux:Convexite}
E.~Giroux.
\newblock Convexit\'{e} en topologie de contact.
\newblock {\em Comment. Math. Helv.}, 66(4):637--677, 1991.

\bibitem[Gro85]{Gromov}
M.~Gromov.
\newblock Pseudoholomorphic curves in symplectic manifolds.
\newblock {\em Invent. Math.}, 82(2):307--347, 1985.

\bibitem[Liu20]{Liu:Positive}
G.~Liu.
\newblock Positive loops of loose {L}egendrian embeddings and applications.
\newblock {\em J. Symplectic Geom.}, 18(3):867--887, 2020.

\bibitem[MNPS13]{Murphy:Plastik}
E.~Murphy, K.~Niederkr\"{u}ger, O.~Plamenevskaya, and A.~I. Stipsicz.
\newblock Loose {L}egendrians and the plastikstufe.
\newblock {\em Geom. Topol.}, 17(3):1791--1814, 2013.

\bibitem[Moh01]{Mohnke}
K.~Mohnke.
\newblock Holomorphic disks and the chord conjecture.
\newblock {\em Ann. of Math. (2)}, 154(1):219--222, 2001.

\bibitem[Mul90]{Muller}
M.-P. Muller.
\newblock Une structure symplectique sur {${\bf R}^6$} avec une sph\`ere
  lagrangienne plong\'{e}e et un champ de {L}iouville complet.
\newblock {\em Comment. Math. Helv.}, 65(4):623--663, 1990.

\bibitem[Mur]{LooseLeg}
E.~Murphy.
\newblock Loose {L}egendrian embeddings in high dimensional contact manifolds.
\newblock Preprint (2012),
  \href{http://arxiv.org/abs/1201.2245}{arXiv:1201.2245 [math.SG]}.

\bibitem[Mur13]{Murphy:Closed}
E.~Murphy.
\newblock Closed exact {L}agrangians in the symplectization of contact
  manifolds.
\newblock Preprint (2013), \url{https://arxiv.org/abs/1304.6620} [math.SG],
  2013.

\bibitem[Nem09]{Nemirovski}
S.~Yu. Nemirovski.
\newblock The homology class of a {L}agrangian {K}lein bottle.
\newblock {\em Izv. Ross. Akad. Nauk Ser. Mat.}, 73(4):37--48, 2009.

\bibitem[OS04]{Ozbagci}
B.~Ozbagci and A.~Stipsicz.
\newblock {\em Surgery on contact 3-manifolds and {S}tein surfaces}, volume~13
  of {\em Bolyai Society Mathematical Studies}.
\newblock Springer-Verlag, Berlin; J\'{a}nos Bolyai Mathematical Society,
  Budapest, 2004.

\bibitem[She09]{Shevchishin}
V.~V. Shevchishin.
\newblock Lagrangian embeddings of the {K}lein bottle and the combinatorial
  properties of mapping class groups.
\newblock {\em Izv. Ross. Akad. Nauk Ser. Mat.}, 73(4):153--224, 2009.

\bibitem[Vit90]{Viterbo:new}
C.~Viterbo.
\newblock A new obstruction to embedding {L}agrangian tori.
\newblock {\em Invent. Math.}, 100(2):301--320, 1990.

\end{thebibliography}

\end{document}